\newtheorem{thm}{Theorem}[section]
\newtheorem{cor}[thm]{Corollary}
\newtheorem{prop}[thm]{Proposition}
\newtheorem*{prob*}{Problem}
\newtheorem*{thm*}{Theorem}
\theoremstyle{definition}
\newtheorem{defn}[thm]{Definition}
\newtheorem*{defn*}{Definition}
\newtheorem{rem}[thm]{Remark}
\numberwithin{equation}{section}
\newcommand{\C}{\mathbb C}
\newcommand{\R}{\mathbb R}
\newcommand{\Y}{\mathbb Y}
\newcommand{\Z}{\mathbb Z}
\newcommand{\Zp}{\mathbb Z_{\geq 0}}
\newcommand{\X}{\mathfrak{X}}
\DeclareMathOperator{\Conf}{Conf} 
\DeclareMathOperator{\Plancherel}{Pl}
\DeclareMathOperator{\Prob}{Prob} 
\DeclareMathOperator{\Pf}{Pf} \DeclareMathOperator{\PF}{Pf}
\DeclareMathOperator{\TX}{\widetilde{X}}
\begin{document}
\title[Pfaffian $L$-ensembles related to measures on partitions]
 {\bf{Pfaffian $L$-ensembles related to the $z$-measures
on partitions with the Jack parameters $\theta=\frac{1}{2}, 2$.}}

\author{Eugene Strahov}

\thanks{Department of Mathematics, The Hebrew University of
Jerusalem, Givat Ram, Jerusalem 91904. E-mail:
strahov@math.huji.ac.il. Supported by US-Israel Binational Science
Foundation (BSF) Grant No. 2006333,
 and by Israel Science Foundation (ISF) Grant No. 0397937.\\
}

\keywords{Random partitions,  random Young diagrams, correlation
functions, Pfaffian point processes, Pfaffian $L$-emnsembles}

\commby{}
\begin{abstract}
We construct   Pfaffian $L$-ensembles related to the $z$-measures on
partitions and to the Plancherel measures on partitions with the
Jack parameters $\theta=\frac{1}{2}, 2$.
 The results imply that these measures on partitions lead to
 Pfaffian point processes, and the correlation kernels of these
 processes can be expressed in terms of the  corresponding $L$-matrices. We give
 explit formulae for these $L$-matrices.
\end{abstract}
\maketitle
\section{Introduction}
It is known that measures on partitions arising in the context of
the representation theory of the infinite symmetric group lead to
determinatal point processes. The most well known example is the
Plancherel measure on partitions studied in many papers, see, for
example, Logan and Shepp \cite{logan}, Vershik and Kerov
\cite{vershik11}, Baik, Deift and Johansson
\cite{BaikDeiftJohansson-increasing
subsequences,BaikDeiftJohansson-Second row}, Deift
\cite{Deift-Combinatorial Theory}, Ivanov and Olshanski
\cite{ivanov}. The relation with determinantal point processes was
established in Borodin, Okounkov, and Olshanski in Ref.
\cite{BorOkounkovOlshanski}, and by Johansson in Ref.
\cite{johansson-discrete orthogonal ensembles}. To explain this
relation let us identify partitions of $n$ with Young diagrams
containing $n$ boxes. The set of such Young diagrams will be
denoted as $\Y_n$. Let $\Y=\Y_0\cup\Y_1\cup\ldots $ be the set of
all Young diagrams including the empty diagram $\emptyset$.
Consider the poissonized Plancherel measure $M_{\Plancherel,\eta}$
on $\Y$ obtained by mixing together the Plancherel measures
$M^{(n)}_{\Plancherel}$ on $\Y_n$, $n=0,1,2,\ldots $,
$$
M_{\Plancherel,\eta}(\lambda)=e^{-\eta}\frac{\eta^{|\lambda|}}{|\lambda|!}M^{(|\lambda|)}_{\Plancherel}(\lambda),\;\lambda\in\Y.
$$
Here $\eta>0$. Under the correspondence
$\lambda\rightarrow\{\lambda_i-i+\frac{1}{2}\}$ the poissonized
Plancherel measure $M_{\Plancherel,\eta}$  turns into the
determinantal point process on the lattice $\Z+\frac{1}{2}$ whose
correlation kernel is the discrete Bessel kernel, see Theorems 1, 2
in  Borodin, Okounkov, and Olshanski \cite{BorOkounkovOlshanski}.

The $z$-measures $M_{z,z'}^{(n)}$ on partitions  is another
important class of measures leading to determinantal point
processes on $\Z+\frac{1}{2}$.  These measures are parameterized
by two complex parameters $z$, $z'$, and were first introduced in
Kerov, Olshanski, and Vershik \cite{KOV1} in the context of the
harmonic analysis on the infinite symmetric group. The
$z$-measures $M_{z,z'}^{(n)}$ were studied in detail by Borodin
and Olshanski \cite{BO-hyper}-\cite{BO-ZmeasuresScalingLimits},
see also related papers by Okounkov \cite{okounkov5}, Borodin,
Olshanski, and Strahov \cite{BorodinOlshanskiStrahov-Giambelli}.
The relation with determinantal point processes arises in a way
similar to that in the case of the Plancherel measure. Namely,
consider the mixed $z$-measures $M_{z,z',\xi}$ with an additional
parameter $\xi\in (0,1)$ obtained by mixing up the $z$-measures
$M_{z,z'}^{(n)}$,
$$
M_{z,z',\xi}(\lambda)=(1-\xi)^{zz'}\frac{(zz')_{|\lambda}|\xi^{|\lambda|}}{|\lambda|!}
M_{z,z'}^{|\lambda|}(\lambda),
$$
where $\lambda$ ranges over $\Y$. Then under the correspondence
$\lambda\rightarrow\{\lambda_i-i+\frac{1}{2}\}$ the measure
$M_{z,z',\xi}$ turns into a determinantal point process on the
lattice $\Z+\frac{1}{2}$. The correlation kernel of this point
process can be written in terms of the Gaussian hypergeometric
functions, see Borodin and Olshanski \cite{BO-hyper}.

In the examples described above  the  determinantal point processes
have a special remarkable feature: they can be understood as
 determinantal $L$-ensembles.  To define a determinantal $L$-ensemble
 let us introduce a finite set (called a phase space)
  $\X$,
 and let $L$ be a $|\X|\times|\X|$ matrix (called the $L$-matrix) whose rows and columns
 are parameterized by points of $\X$. If $L$ is positive definite,
 one can define a random point process on $\X$ by
 $$
 \Prob\{X\}=\frac{\det L(X|X)}{\det(I+L)},
 $$
 where $X$ is a subset of $\X$, and $L(X|X)$ is the symmetric
 submatrix of $L$ corresponding to $X$. It is a well-known fact
 that such $L$-ensemble is a determinantal point process whose
 correlation kernel $K$ is given by $K=L(I+L)^{-1}$.
 The definition of determinantal $L$-ensembles can be extended to infinite phase
 spaces $\X$ provided that the determinant $\det(I+L)$ is well
 defined. For other properties of determinantal $L$-ensembles and their applications
 we
 refer the reader to Borodin \cite{Borodin-DeterminantalPointProcesses}, Section 5.

For the poissonized Plancherel measure $M_{\Plancherel,\eta}$, and
for the mixed $z$-measures $M_{z,z',\xi}$ the corresponding
$L$-matrices can be written explicitly in terms of elementary
functions only, and these matrices have remarkably simple forms.
Moreover, it turns out that the kernels $L(x,y)$ defining the
$L$-matrices are integrable in the sense of Its, Izergin, Korepin,
and Slavnov \cite{its}. This leads to an algorithm (based on
Riemann-Hilbert problems) to compute explicitly the correlation
kernel $K$, see Borodin \cite{Borodin-RiemannHilbert}.

Kerov \cite{kerov}, Borodin and Olshanski
\cite{BO-ZmeasuresScalingLimits} have shown that it is natural to
consider a deformation $M_{z,z',\theta}^{(n)}$ of
$M_{z,z'}^{(n)}$, where $\theta>0$ is called the parameter of
deformation (or the Jack parameter). Such deformations are in many
ways similar to log-gas (random-matrix) models with arbitrary
$\beta=2\theta$. As in the theory of log-gas models the value
$\beta=2$ is a distinguished one and leads to determinantal point
processes. On the next level of difficulty are the cases
$\theta=2$ or $\theta=1/2$ ($\beta=4$ or $\beta=1$, respectively).
In these cases the measures $M_{z,z',\theta}^{(n)}$ lead to
Pfaffian point processes, similar to ensembles of Random Matrix
Theory of $\beta=4$ or $\beta=1$ symmetry types, see Borodin and
Strahov \cite{Borodin-Strahov-Ensembles}, Strahov
\cite{Strahov-Matrix-Kernel}-\cite{Strahov-z-measures} for the
available results in this direction. It turns out that such
Pfaffian point processes are of great interest to the harmonic
analysis on the infinite symmetric group. The fact that these
measures play a role in the harmonic analysis was established by
Olshanski \cite{olshanskiletter}, and the detailed explanation of
this representation-theoretic aspect can be found in Strahov
\cite{Strahov-Z-measures-Gelfandpairs}.

The aim of this work is to show that the Pfaffian point processes
related to the $z$-measures on partitions with the Jack parameters
$\theta=\frac{1}{2}, 2$, and to the Plancherel measures on
partitions with the Jack parameters $\theta=\frac{1}{2}, 2$ can be
understood as Pfaffian $L$-ensembles (see Borodin and Rains
\cite{BorodinRains-EynardMehtaTheorem} , Borodin and Strahov
\cite{BorodinStrahov-CharacteristicPolynomials} and Section
\ref{SectionPfaffianLEnsembles} of the present paper for a
definition of  Pfaffian $L$-ensembles). The paper gives explicitly
the $L$-matrices for these ensembles, see Theorem
\ref{MainTheoremTheta2} and Theorem \ref{MainTheoremThetaHalf}.

In the context of the harmonic analysis on the infinite symmetric
group the most important problem is to understand the scaling
limits of the determinantal and of the Pfaffian point processes
defined by the mixing $z$-measures $M_{z,z',\xi,\theta}$
($\theta=1,\frac{1}{2}$, or $2$), as $\xi\nearrow 1$. A possible
approach to this problem is to study the convergence of the
corresponding (determinantal or Pfaffian) $L$-ensemble to that
defined by a limiting bounded operator $\mathcal{L}$ acting in
$L^2(\R\setminus\{\emptyset\})\oplus
L^2(\R\setminus\{\emptyset\})$. The limiting operator
$\mathcal{L}$ completely characterizes the limiting point process
relevant for the harmonic analysis. In particular, the correlation
kernel of the limiting process can be expressed in terms of
$\mathcal{L}$. For $\theta=1$ such a limit transition was
investigated by Borodin \cite{Borodin-RiemannHilbert} (see also
Borodin and Olshanski \cite{BorOlsh-Point processes} and the
references therein).

The results of the present paper (explicit formulae for the
$L$-matrices defining the Pfaffian $L$-ensembles for
$M_{z,z',\xi,\theta=\frac{1}{2}}$ and $M_{z,z',\xi,\theta=2}$) lay a
foundation for this approach in the Pfaffian case. The author plans
to investigate the transition to such limiting ensembles in a
subsequent publication.

The paper is organized as follows.  Section
\ref{SectionPfaffianLEnsembles} contains the definition and some
basic properties of Pfaffian $L$-ensembles.  Section
\ref{Sectionztheta} contains the definitions of the $z$-measures
and the Plancherel measures on partitions with an arbitrary Jack
parameter $\theta>0$. The main results of the present work are
stated in Section \ref{SectionMainResults}, see Theorem
\ref{MainTheoremTheta2} and Theorem \ref{MainTheoremThetaHalf}.
Section \ref{SectionSpecialClass} investigates the properties of a
special class of Pfaffian $L$-ensembles, which is relevant for
measures on partitions considered in this paper. In Sections
\ref{SectionPlancherelLebsenbles} and
\ref{SectionZmeausuresLebsenbles} we rewrite the $z$-measures and
Plancherel measures on partitions with  the Jack parameters
$\theta=\frac{1}{2}, 2$ in terms of suitable (Frobenius-type)
coordinates. Then  we use formulae obtained in Section
\ref{SectionSpecialClass} to prove Theorem \ref{MainTheoremTheta2}
and Theorem \ref{MainTheoremThetaHalf}.

\section{Pfaffian $L$-ensembles}\label{SectionPfaffianLEnsembles}
Let $\X$ be a countable set. Given  $\X$ let us construct two copies
of $\X$, and denote them by $\X'$ and $\X''$. Each point $x\in\X$
has a prototype $x'\in\X'$ and another one $x''\in\X''$. Let $L$ be
a $|\X|\times |\X|$ skew symmetric matrix constructed from $2\times
2$ blocks with rows and columns parameterized by elements of
$\X'\times\X''$. The $2\times 2$ blocks have the form
\begin{equation}\label{PLl}
L(x,y)=\left[\begin{array}{cc}
  L(x',y') & L(x',y'')\\
  L(x'',y') & L(x'',y'') \\
\end{array}\right].
\end{equation}
Once $x,y$ take values in $\X$, the variables $x'$, $y'$ ($x''$,
$y''$) are the elements of $\X'$ ($\X''$) corresponding to $x$,
$y$. The matrix $L$ can also be  understood as a $2|\X|\times
2|\X|$ matrix with rows and columns parameterized by points of
$\X$.

Let $\Conf(\X)$ be the set of all subsets of $\X$ and denote by
$\Conf(\X)_0\subset\Conf(\X)$ the set of finite subsets of $\X$. To
any $X\subset\Conf(\X)_0$ there will correspond a $2\times 2$ block
antisymmetric submatrix of $L$ of a finite size. We denote this
submatrix by $L(X\vert X)$. If $X$ consists of $m$ points,
\begin{equation}
X=\left(x_1,\ldots , x_m\right),\;\; X\in \Conf(\X)_0\nonumber,
\end{equation}
then the submatrix $L(X\vert X)$ has the form
\begin{equation}\label{PLl1}
L(X\vert X)=
\left[%
\begin{array}{ccccc}
  0 & L(x_1',x_1'') & \ldots & L(x_1',x_m') & L(x_1',x_m'') \\
  -L(x_1',x_1'') & 0 &  &L(x_1'',x_m') & L(x_1'',x_m'') \\
  \vdots & & &  &  \\
  -L(x_1',x_m') & -L(x_1'',x_m')  &  & 0 & L(x_m',x_m'') \\
  -L(x_1',x_m'') & -L(x_1'',x_m'') &  & -L(x_m',x_m'') & 0 \\
\end{array}%
\right]. \nonumber
\end{equation}
Denote by $\mbox{Pf}\;A$ the Pfaffian of an even dimensional
antisymmetric matrix $A$.  In what follows we always assume that the
matrix $L$ has the
 property
\begin{equation}
\Pf\;L(X\vert X)\geq 0,\;\;\;\forall X\in\Conf(\X)_0 ,
\end{equation}
and
\begin{equation}\label{Condition}
\sum\limits_{X: X\in\Conf(\X)_0}\Pf\;L(X\vert X)<\infty.
\end{equation}
Let $J$ be $2\times 2$ block matrix of format $\X\times \X$ with
matrix elements
\begin{equation}
J(x,y)=\left\{%
\begin{array}{ll}
    \left[\begin{array}{cc}
      0 & 1 \\
      -1 & 0 \\
    \end{array}\right], & x=y; \\
    0, & \hbox{otherwise}. \\
\end{array}%
\right.
\end{equation}
Define the expression $\Pf\;(J+L)$ by the formula
\begin{equation}\label{Pf(J+L)}
\Pf\;(J+L)=\sum\limits_{X: X\in\Conf(\X)_0}\Pf\;L(X\vert X).
\end{equation}
Condition (\ref{Condition}) ensures that the sum in the righthand
side of equation (\ref{Pf(J+L)}) is finite.  Note that if $\X$ is a
finite set, then $L$ and $J$  are matrices  of finite size, and
equation (\ref{Pf(J+L)}) is the expansion of $\Pf\;(J+L)$ into a sum
of Pfaffians of antisymmetric $2\times 2$ block submatrices $L(X|X)$
of $L$.
\begin{defn}
A point process on $\X$ defined by
\begin{equation}
\mbox{Prob}_{L}(X)=\dfrac{\Pf\;L(X\vert
X)}{\Pf\;(J+L)},\;\;\;\forall X\in\Conf(\X)_0,
\end{equation}
is called the Pfaffian $L$-ensemble.
\end{defn}
The fact that
$\sum\limits_{X\in\subset\Conf(\X)_0}\mbox{Prob}_{L}(X)=1$ follows
from equation (\ref{Pf(J+L)}).

By correlation functions  $\varrho(X)$ for the Pfaffian
$L$-ensembles we mean the probabilities that random configurations
include fixed sets $X$, namely
$$
\varrho(X)=\sum\limits_{Y:\;Y\in\Conf(\X)_0,\;Y\supseteq
X}\mbox{Prob}_{L}(Y).
$$
The striking property of the Pfaffian $L$-ensembles is that the
correlation function $\varrho(X)$ is given by a Pfaffian,
\begin{equation}
\varrho(X)=\Pf\left[K(x_i,x_j)\right]_{i,j=1}^m,\;\;X=(x_1,\ldots
,x_m)\in\Conf(\X)_0.
\end{equation}
Here the matrix $K$ is  a $|\X|\times |\X|$ skew symmetric matrix
made from $2\times 2$ blocks with rows and columns parameterized by
elements of $\X'\times\X''$, and defined in terms of $L$ by the
expression
\begin{equation}
K=J+(J+L)^{-1}.
\end{equation}
The Pfaffian expression for $m$-point correlation functions reflects
the fact that the Pfaffian $L$-ensembles is a special class of
Pfaffian point processes.

\section{The $z$-measures on partitions with the general
parameter $\theta>0$}\label{Sectionztheta}
 We use Macdonald
\cite{macdonald} as a basic reference for the notations related to
integer partitions and to symmetric functions. In particular, every
decomposition
$$
\lambda=(\lambda_1,\lambda_2,\ldots,\lambda_l):\;
n=\lambda_1+\lambda_2+\ldots+\lambda_{l},
$$
where $\lambda_1\geq\lambda_2\geq\ldots\geq\lambda_l$ are positive
integers, is called an integer partition. We identify integer
partitions with the corresponding Young diagrams, and denote the set
of all Young diagrams by $\Y$.  The set of Young diagrams with $n$
boxes  is denoted by $\Y_n$. Thus
$$
\Y=\bigcup\limits_{n=0}^{\infty}\Y_n.
$$

Following Borodin and Olshanski \cite{BO-ZmeasuresScalingLimits},
Section 1, let $M_{z,z',\theta}^{(n)}$ be a complex measure on
$\Y_n$ defined by
\begin{equation}\label{EquationVer4zmeasuren}
M_{z,z',\theta}^{(n)}=\frac{n!(z)_{\lambda,\theta}(z')_{\lambda,\theta}}{(t)_nH(\lambda,\theta)H'(\lambda,\theta)},
\end{equation}
where $n=1,2,\ldots $, and where we use the following notation
\begin{itemize}
    \item $z,z'\in\C$ and $\theta>0$ are parameters, the parameter
    $t$ is defined by
    $$
    t=\frac{zz'}{\theta}.
    $$
    \item $(t)_n$ stands for the Pochhammer symbol,
    $$
    (t)_n=t(t+1)\ldots (t+n-1)=\frac{\Gamma(t+n)}{\Gamma(t)}.
    $$
    \item
    $(z)_{\lambda,\theta}$ is a multidemensional analogue of the
    Pochhammer symbol defined by
    $$
    (z)_{\lambda,\theta}=\prod\limits_{(i,j)\in\lambda}(z+(j-1)-(i-1)\theta)
    =\prod\limits_{i=1}^{l(\lambda)}(z-(i-1)\theta)_{\lambda_i}.
    $$
     Here $(i,j)\in\lambda$ stands for the box in the $i$th row
     and the $j$th column of the Young diagram $\lambda$, and we
     denote by $l(\lambda)$ the number of nonempty rows in the
     Young diagram $\lambda$.
    \item
    $$
    H(\lambda,\theta)=\prod\limits_{(i,j)\in\lambda}\left((\lambda_i-j)+(\lambda_j'-i)\theta+1\right),
   $$
   $$
     H'(\lambda,\theta)=\prod\limits_{(i,j)\in\lambda}\left((\lambda_i-j)+(\lambda_j'-i)\theta+\theta\right),
   $$
      where $\lambda'$ denotes the transposed diagram.
\end{itemize}
\begin{prop}\label{PropositionHH}
The following symmetry relations hold true
$$
H(\lambda,\theta)=\theta^{|\lambda|}H'(\lambda',\frac{1}{\theta}),\;\;(z)_{\lambda,\theta}
=(-\theta)^{|\lambda|}\left(-\frac{z}{\theta}\right)_{\lambda',\frac{1}{\theta}}.
$$
Here $|\lambda|$ stands for the number of boxes in the diagram
$\lambda$.
\end{prop}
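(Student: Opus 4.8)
The plan is to prove both identities factor-by-factor, exploiting the fact that each quantity is a product over the boxes of the relevant diagram. The key tool is the standard bijection between boxes sending a box $(i,j)\in\lambda$ to the box $(j,i)\in\lambda'$; under this bijection the parts transform as $(\lambda')_a=\lambda_a'$ and, since conjugation is an involution, $((\lambda')')_b=\lambda_b$. Thus the roles of the arm-type quantity $\lambda_i-j$ and the leg-type quantity $\lambda_j'-i$ are interchanged when one passes from $\lambda$ to $\lambda'$, which is exactly the source of the symmetry.

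For the relation $H(\lambda,\theta)=\theta^{|\lambda|}H'(\lambda',\tfrac{1}{\theta})$ I would fix a box $(a,b)\in\lambda'$ and write out its factor in $H'(\lambda',\tfrac{1}{\theta})$. Using $(\lambda')_a=\lambda_a'$ and $((\lambda')')_b=\lambda_b$, this factor equals $(\lambda_a'-b)+(\lambda_b-a)\tfrac{1}{\theta}+\tfrac{1}{\theta}$. Multiplying by $\theta$ gives $\theta(\lambda_a'-b)+(\lambda_b-a)+1$, which is precisely the factor $(\lambda_i-j)+(\lambda_j'-i)\theta+1$ attached to the corresponding box $(i,j)=(b,a)\in\lambda$ in $H(\lambda,\theta)$. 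Since $\lambda$ and $\lambda'$ have the same number $|\lambda|$ of boxes, collecting one factor of $\theta$ per box yields the first identity.

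For the Pochhammer relation $(z)_{\lambda,\theta}=(-\theta)^{|\lambda|}\left(-\tfrac{z}{\theta}\right)_{\lambda',\frac{1}{\theta}}$ I would proceed identically. The factor of $\left(-\tfrac{z}{\theta}\right)_{\lambda',\frac{1}{\theta}}$ coming from a box $(a,b)\in\lambda'$ is $-\tfrac{z}{\theta}+(b-1)-(a-1)\tfrac{1}{\theta}$; multiplying by $-\theta$ produces $z+(a-1)-(b-1)\theta$, which matches the factor $z+(j-1)-(i-1)\theta$ of $(z)_{\lambda,\theta}$ at the transposed box $(i,j)=(b,a)\in\lambda$. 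Again each box contributes exactly one factor of $-\theta$, giving the overall power $(-\theta)^{|\lambda|}$.

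Both identities therefore reduce to the elementary check that the per-box factors agree after the indicated rescaling, so there is no genuine obstacle beyond bookkeeping. The only point requiring care is the correct relabelling of the indices and of the parts $\lambda_i$, $\lambda_j'$ under transposition; in particular one must keep in mind that conjugation is an involution, so that $((\lambda')')=\lambda$ and the second transpose brings the factors back into the shape appearing in $H$ and in $(z)_{\lambda,\theta}$.
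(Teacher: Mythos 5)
Your proof is correct and follows essentially the same route as the paper, which simply asserts that the relations "follow immediately from the definitions"; your box-by-box verification under the transposition bijection $(i,j)\in\lambda \leftrightarrow (j,i)\in\lambda'$ is exactly the computation being invoked. The per-box factor matching and the bookkeeping of one factor of $\theta$ (resp. $-\theta$) per box are all accurate.
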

\begin{proof}
These relations follow immediately from definitions of
$H(\lambda,\theta)$ and $(z)_{\lambda,\theta}$.
\end{proof}
\begin{prop}\label{PropositionMSymmetries}
We have
$$
M_{z,z',\theta}^{(n)}(\lambda)=M_{-z/\theta,-z'/\theta,1/\theta}^{(n)}(\lambda').
$$
\end{prop}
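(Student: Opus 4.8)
The plan is to prove Proposition \ref{PropositionMSymmetries} by direct substitution of the symmetry relations of Proposition \ref{PropositionHH} into the explicit formula \eqref{EquationVer4zmeasuren}, tracking the resulting powers of $\theta$ and checking that they cancel. Write $\tilde{\theta}=1/\theta$, $\tilde{z}=-z/\theta$, $\tilde{z}'=-z'/\theta$, so that the claim reads $M_{z,z',\theta}^{(n)}(\lambda)=M_{\tilde z,\tilde z',\tilde\theta}^{(n)}(\lambda')$. First I would check that the Pochhammer prefactor is unchanged: the parameter $t$ for the transformed measure is $\tilde t=\tilde z\,\tilde z'/\tilde\theta=\bigl((-z/\theta)(-z'/\theta)\bigr)\theta=zz'/\theta=t$, so $(\tilde t)_n=(t)_n$ and this factor needs no further work. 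Since $\lambda\in\Y_n$ we also have $|\lambda|=|\lambda'|=n$, which I will use freely.

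Next I would rewrite the numerator. The second relation of Proposition \ref{PropositionHH}, read as an identity valid for all $z,\theta,\lambda$, gives $(z)_{\lambda,\theta}=(-\theta)^{|\lambda|}(\tilde z)_{\lambda',\tilde\theta}$, hence $(\tilde z)_{\lambda',\tilde\theta}=(-\theta)^{-|\lambda|}(z)_{\lambda,\theta}$, and likewise for $z'$. For the denominator I need both $H'(\lambda',\tilde\theta)$ and $H(\lambda',\tilde\theta)$. The first is immediate from the first relation of Proposition \ref{PropositionHH}, $H(\lambda,\theta)=\theta^{|\lambda|}H'(\lambda',\tilde\theta)$, which rearranges to $H'(\lambda',\tilde\theta)=\theta^{-|\lambda|}H(\lambda,\theta)$. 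To get $H(\lambda',\tilde\theta)$ I would apply that same relation again, this time with $\lambda$ replaced by $\lambda'$ and $\theta$ replaced by $\tilde\theta$; using $(\lambda')'=\lambda$ and $1/\tilde\theta=\theta$ this yields $H(\lambda',\tilde\theta)=\tilde\theta^{|\lambda|}H'(\lambda,\theta)=\theta^{-|\lambda|}H'(\lambda,\theta)$.

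Assembling these pieces into $M_{\tilde z,\tilde z',\tilde\theta}^{(n)}(\lambda')$, the numerator acquires a factor $(-\theta)^{-2|\lambda|}=\theta^{-2|\lambda|}$ and the denominator a factor $\theta^{-2|\lambda|}$, which cancel, returning exactly the formula for $M_{z,z',\theta}^{(n)}(\lambda)$. There is no genuine obstacle here: the statement is a bookkeeping consequence of Proposition \ref{PropositionHH}. The only point demanding care is to apply the $H$-relation in two different specializations (to $(\lambda,\theta)$ and to $(\lambda',\tilde\theta)$) and to keep the signs in $(-\theta)^{\pm|\lambda|}$ straight, noting that the sign is harmless since it enters to an even power.
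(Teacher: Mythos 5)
Your proof is correct and follows exactly the route the paper intends: its own proof is the one-line instruction to substitute the definition (\ref{EquationVer4zmeasuren}) and apply Proposition \ref{PropositionHH}, which is precisely the bookkeeping you carry out. The details you supply (invariance of $t$, applying the $H$-relation in both specializations $(\lambda,\theta)$ and $(\lambda',1/\theta)$, and the cancellation of the $\theta^{-2|\lambda|}$ factors) are all accurate.
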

\begin{proof}
Use definition of $M_{z,z',\theta}^{(n)}(\lambda)$, equation
(\ref{EquationVer4zmeasuren}), and apply Proposition
\ref{PropositionHH}.
\end{proof}
\begin{prop}\label{Prop1.3}
We have
$$
\sum\limits_{\lambda\in\Y_n}M_{z,z',\theta}^{(n)}(\lambda)=1.
$$
\end{prop}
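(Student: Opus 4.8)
The plan is to deduce the claim from the single generating-function identity
$$\sum_{\lambda\in\Y}\frac{(z)_{\lambda,\theta}(z')_{\lambda,\theta}}{H(\lambda,\theta)H'(\lambda,\theta)}\,\xi^{|\lambda|}=(1-\xi)^{-zz'/\theta},$$
read as an identity of formal power series in $\xi$ (for each fixed $n$ only finitely many $\lambda$ contribute, so extracting coefficients is legitimate). Since $(1-\xi)^{-t}=\sum_{n\geq 0}\frac{(t)_n}{n!}\xi^n$ with $t=zz'/\theta$, comparing coefficients of $\xi^n$ gives $\sum_{\lambda\in\Y_n}\frac{(z)_{\lambda,\theta}(z')_{\lambda,\theta}}{H(\lambda,\theta)H'(\lambda,\theta)}=\frac{(t)_n}{n!}$, and multiplying by $n!/(t)_n$ yields exactly $\sum_{\lambda\in\Y_n}M_{z,z',\theta}^{(n)}(\lambda)=1$ (the case $n=0$ reducing to the single term $\lambda=\emptyset$). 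Thus everything reduces to the displayed identity.

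To prove the identity I would pass to the theory of Jack symmetric functions with Jack parameter $\theta$, i.e. Macdonald parameter $\alpha=1/\theta$, following Macdonald \cite{macdonald}, Chapter~VI. First I would record that the two denominator products are, up to a uniform power of $\theta$, the standard hook products: writing $a(s)$ and $l(s)$ for the arm- and leg-lengths of a box $s\in\lambda$, one has $H(\lambda,\theta)=\prod_{s}(a(s)+\theta l(s)+1)=\theta^{|\lambda|}c'_\lambda(1/\theta)$ and $H'(\lambda,\theta)=\prod_{s}(a(s)+\theta l(s)+\theta)=\theta^{|\lambda|}c_\lambda(1/\theta)$, so that the normalization $j_\lambda=c_\lambda c'_\lambda$ of the integral-form Jack polynomial $J_\lambda$ satisfies $j_\lambda=\theta^{-2|\lambda|}H(\lambda,\theta)H'(\lambda,\theta)$. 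Next I would use the principal-specialization formula $J_\lambda(1^N;1/\theta)=\prod_{(i,j)\in\lambda}\bigl(N+\theta^{-1}(j-1)-(i-1)\bigr)$, which is a polynomial in $N$; specializing $N=z/\theta$ and clearing a factor $\theta^{-|\lambda|}$ identifies the evaluation with the multidimensional Pochhammer symbol, namely $J_\lambda(1^{z/\theta};1/\theta)=\theta^{-|\lambda|}(z)_{\lambda,\theta}$. Because $J_\lambda(1^N;1/\theta)$ is a polynomial in $N$, this prescription defines an algebra homomorphism $\Lambda\to\C$ for arbitrary complex $z$, so no integrality of $z/\theta$ is needed.

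The identity then follows by applying two such specializations to the Cauchy identity for Jack polynomials, $\prod_{i,j}(1-x_iy_j)^{-1/\alpha}=\sum_\lambda J_\lambda(x)J_\lambda(y)/j_\lambda$. Substituting $x=1^{z/\theta}$ and $y=\xi\cdot 1^{z'/\theta}$ and using homogeneity $J_\lambda(\xi y)=\xi^{|\lambda|}J_\lambda(y)$, the left-hand side becomes $\sum_\lambda \theta^{-2|\lambda|}(z)_{\lambda,\theta}(z')_{\lambda,\theta}\xi^{|\lambda|}/j_\lambda=\sum_\lambda (z)_{\lambda,\theta}(z')_{\lambda,\theta}\xi^{|\lambda|}/(H(\lambda,\theta)H'(\lambda,\theta))$, where the powers of $\theta$ cancel by the relation $j_\lambda=\theta^{-2|\lambda|}HH'$ above, while the right-hand side is a product of $(z/\theta)(z'/\theta)$ factors each equal to $(1-\xi)^{-\theta}$, that is $(1-\xi)^{-zz'/\theta}$. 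This is precisely the desired identity.

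The main obstacle is bookkeeping rather than conceptual: one must fix a single set of conventions for Jack polynomials and track all the $\theta^{\pm|\lambda|}$ factors relating $H,H'$ to $c_\lambda,c'_\lambda$ and relating the specialization value to $(z)_{\lambda,\theta}$, so that they cancel exactly. The only genuinely delicate point is the passage from the integer principal specialization to complex $z$: I would justify it by noting that both $J_\lambda(1^N;1/\theta)$ and the Pochhammer symbol are polynomials (in $N$, resp. in $z$) agreeing on an arithmetic progression, hence identically, and that the Cauchy identity is an identity of formal power series in $\xi$ whose $\xi^n$-coefficient is a finite sum, so that specialization and coefficient extraction commute. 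Alternatively, one may simply invoke the corresponding normalization computation in Borodin and Olshanski \cite{BO-ZmeasuresScalingLimits}.
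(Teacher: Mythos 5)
Your proof is correct, but it is not the route the paper takes: for Proposition \ref{Prop1.3} the paper gives no argument at all, deferring entirely to Kerov \cite{kerov} and Borodin--Olshanski \cite{BO-ZmeasuresScalingLimits,BOHARMONICFUNCTIONS}. What you wrote is essentially the standard derivation that lives in those references: the Cauchy identity for integral-form Jack polynomials, $\sum_\lambda J_\lambda(x)J_\lambda(y)/j_\lambda=\prod_{i,j}(1-x_iy_j)^{-1/\alpha}$, combined with the principal specialization $J_\lambda(1^N;\alpha)=\prod_{(i,j)\in\lambda}\bigl(N+\alpha(j-1)-(i-1)\bigr)$ extended to complex $N$ by polynomiality. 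Your bookkeeping checks out: with $\alpha=1/\theta$ one indeed has $H(\lambda,\theta)=\theta^{|\lambda|}c'_\lambda(1/\theta)$, $H'(\lambda,\theta)=\theta^{|\lambda|}c_\lambda(1/\theta)$, hence $j_\lambda=\theta^{-2|\lambda|}H(\lambda,\theta)H'(\lambda,\theta)$, and $J_\lambda(1^{z/\theta};1/\theta)=\theta^{-|\lambda|}(z)_{\lambda,\theta}$, so all powers of $\theta$ cancel and the two specializations turn the Cauchy identity into $\sum_\lambda (z)_{\lambda,\theta}(z')_{\lambda,\theta}\,\xi^{|\lambda|}/\bigl(H(\lambda,\theta)H'(\lambda,\theta)\bigr)=(1-\xi)^{-zz'/\theta}$; comparing coefficients of $\xi^n$ against $(1-\xi)^{-t}=\sum_{n\geq 0}\frac{(t)_n}{n!}\xi^n$ yields the proposition (under the implicit standing assumption $(t)_n\neq 0$, without which $M^{(n)}_{z,z',\theta}$ is not even defined). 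Your justification of the delicate step --- that the specialization $p_k\mapsto N$ is an algebra homomorphism for arbitrary complex $N$, and that coefficient extraction in $\xi$ involves only finite sums --- is exactly what is needed to make the formal manipulation rigorous. Beyond self-containedness, your route buys something extra: the intermediate generating-function identity is precisely the normalization $\sum_{\lambda\in\Y}M_{z,z',\xi,\theta}(\lambda)=1$ of the mixed measure (\ref{EquationMzztheta}), so your argument simultaneously proves the paper's subsequent proposition on the mixed $z$-measure, which the paper derives from Proposition \ref{Prop1.3}.
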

\begin{proof}
See Kerov \cite{kerov}, Borodin and Olshanski
\cite{BO-ZmeasuresScalingLimits,BOHARMONICFUNCTIONS}.
\end{proof}
\begin{prop}\label{PropositionPositivity}
If parameters $z, z'$ satisfy one of the three conditions listed
below, then the measure $M_{z,z',\theta}^{(n)}$ defined by
expression (\ref{EquationVer4zmeasuren}) is a probability measure on
$Y_n$. The conditions are as follows.\begin{itemize}
    \item Principal series: either
$z\in\C\setminus(\Z_{\leq 0}+\Zp\theta)$ and $z'=\bar z$.
    \item The complementary series: the parameter $\theta$ is a rational number, and both $z,z'$
are real numbers lying in one of the intervals between two
consecutive numbers from the lattice $\Z+\Z\theta$.
    \item The degenerate series: $z,z'$ satisfy one of the
    following conditions\\
    (1) $(z=m\theta, z'>(m-1)\theta)$ or $(z'=m\theta,
    z>(m-1)\theta)$;\\
    (2) $(z=-m, z'<-m+1)$ or $(z'=-m,
    z<m-1)$.
\end{itemize}
\end{prop}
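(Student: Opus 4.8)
The plan is to reduce the statement to a sign condition on a single ratio, because the normalization is automatic. By Proposition~\ref{Prop1.3} we have $\sum_{\lambda\in\Y_n}M_{z,z',\theta}^{(n)}(\lambda)=1$ whenever the right-hand side of (\ref{EquationVer4zmeasuren}) is defined, so it suffices to show that each weight $M_{z,z',\theta}^{(n)}(\lambda)$ is nonnegative. In (\ref{EquationVer4zmeasuren}) one has $n!>0$, and for a box $(i,j)\in\lambda$ the arm length $\lambda_i-j$ and the leg length $\lambda_j'-i$ are nonnegative, so each factor of $H(\lambda,\theta)$ is $\ge 1$ and each factor of $H'(\lambda,\theta)$ is $\ge\theta>0$; hence $H(\lambda,\theta),H'(\lambda,\theta)>0$. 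Therefore the weight is nonnegative if and only if
$$
\frac{(z)_{\lambda,\theta}\,(z')_{\lambda,\theta}}{(t)_n}\ge 0,\qquad t=\frac{zz'}{\theta},
$$
and it remains to verify this in each of the three regimes.

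For the principal series I would use that the shifts $(j-1)-(i-1)\theta$ in $(z)_{\lambda,\theta}=\prod_{(i,j)\in\lambda}\bigl(z+(j-1)-(i-1)\theta\bigr)$ are real, so $z'=\bar z$ yields $(z')_{\lambda,\theta}=\overline{(z)_{\lambda,\theta}}$ and hence $(z)_{\lambda,\theta}(z')_{\lambda,\theta}=|(z)_{\lambda,\theta}|^2\ge 0$. A factor vanishes exactly when $z=(i-1)\theta-(j-1)$, and these exceptional values fill out $\Zp\theta+\Z_{\leq 0}=\Z_{\leq 0}+\Zp\theta$; the hypothesis $z\notin\Z_{\leq 0}+\Zp\theta$ rules them out (in particular $z\neq 0$), so $t=|z|^2/\theta>0$, $(t)_n>0$, and the ratio is strictly positive.

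For the degenerate series I would treat case (1) directly and reduce case (2) to it by the transposition symmetry. If $z=m\theta$ with $m\in\Z_{\ge 1}$ and $z'>(m-1)\theta$, then the factor of $(z)_{\lambda,\theta}$ at the box $(m+1,1)$ equals $0$, so $(z)_{\lambda,\theta}=0$ unless $l(\lambda)\le m$; on diagrams with at most $m$ rows every factor $(m-i+1)\theta+(j-1)$ is $\ge\theta>0$, and since $(i-1)\theta\le(m-1)\theta<z'$ every factor $z'+(j-1)-(i-1)\theta$ is positive as well, so both $(z)_{\lambda,\theta}$ and $(z')_{\lambda,\theta}$ are positive there. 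As $z'>(m-1)\theta\ge 0$ we get $t=mz'>0$, whence $(t)_n>0$ and the weight is positive on its support and zero elsewhere. Case (2), $(z=-m,\ z'<-m+1)$, is mapped to case (1) by Proposition~\ref{PropositionMSymmetries}: passing to $(-z/\theta,-z'/\theta,1/\theta)$ and the transposed diagram turns these data into $\tilde z=m\cdot\tfrac1\theta$ and $\tilde z'>(m-1)\cdot\tfrac1\theta$, which is case (1) for the parameter $1/\theta$, already established for every positive value of the Jack parameter.

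The complementary series is the part I would expect to be the real obstacle, since here neither conjugation nor a support argument applies and one must control the sign of the product rather than of each factor. The key observation is that rationality of $\theta$ makes $\Z+\Z\theta$ a genuine discrete lattice---for irrational $\theta$ it is dense and no interval between consecutive points exists, which is why the hypothesis demands $\theta\in\mathbb Q$. Writing $\theta=p/q$ in lowest terms, $\Z+\Z\theta=\tfrac1q\Z$, and the shifts $s=(j-1)-(i-1)\theta$ are lattice points, so translation by $s$ carries the open gap containing $z,z'$ to another open gap. Since $0$ is itself a lattice point it lies in no open gap; hence for every box $(i,j)$ the two factors $z+s$ and $z'+s$ sit in a common open gap and have the same sign. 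Consequently $(z)_{\lambda,\theta}$ and $(z')_{\lambda,\theta}$ have equally many negative factors, so their product is positive, while $z,z'$ lying in one gap have the same sign and give $t=zz'/\theta>0$ and $(t)_n>0$. The delicate point to get right is precisely this ``common gap implies equal sign'' step together with its dependence on rationality; once it is in place the ratio is nonnegative for every $\lambda$, completing all three cases.
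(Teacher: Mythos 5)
Your proof is correct; note, however, that the paper itself gives no argument for this proposition at all --- its ``proof'' is a citation to Propositions 1.2 and 1.3 of Borodin and Olshanski \cite{BO-ZmeasuresScalingLimits}. Your write-up therefore supplies what the paper outsources, and the skeleton you chose (reduce to the sign of $(z)_{\lambda,\theta}(z')_{\lambda,\theta}/(t)_n$ using $n!>0$, $H(\lambda,\theta)>0$, $H'(\lambda,\theta)>0$, and get normalization for free from Proposition \ref{Prop1.3}, which is a rational identity in $z,z'$) is essentially the standard one from that reference. All three regimes are handled soundly: conjugation for the principal series, where you correctly observe that $0\in\Z_{\leq 0}+\Zp\theta$ so the hypothesis forces $z\neq 0$ and hence $(t)_n>0$; the lattice-gap sign-matching argument for the complementary series, where you correctly identify that rationality of $\theta$ is exactly what makes $\Z+\Z\theta=\tfrac{1}{q}\Z$ a discrete lattice so that ``common gap $\Rightarrow$ common sign'' makes sense; and, for the degenerate series, truncation of the support to diagrams with at most $m$ rows followed by the transposition symmetry of Proposition \ref{PropositionMSymmetries} to convert case (2) into case (1) for the parameter $1/\theta$. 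Two small points you should make explicit: first, $m$ in the degenerate series is a positive integer (your vanishing factor at the box $(m+1,1)$ and the inequality $(m-1)\theta\geq 0$ both rely on this); second, your reduction implicitly needs $(t)_n\neq 0$ for the measure to be defined at all, and it is worth flagging that your case-by-case verification of $t>0$ is what guarantees this, not just nonnegativity. Incidentally, your reading of case (2) silently corrects what appears to be a typo in the paper's statement ($z<m-1$ should be $z<-m+1$ in the symmetric sub-case); deferring to the original source, as the paper does, sidesteps such bookkeeping, while your self-contained route makes the role of each hypothesis visible.
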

\begin{proof} See Propositions 1.2, 1.3 in Borodin and Olshanski
\cite{BO-ZmeasuresScalingLimits}.
\end{proof}
In what follows we fix two complex parameters $z$, $z'$ such that
the conditions in Proposition \ref{PropositionPositivity} are
satisfied, and $M_{z,z',\theta}^{(n)}$ is a probability measure on
$\Y_n$.

It is convenient  to mix all measures $M_{z,z',\theta}^{(n)}$, and
to define a new measure $M_{z,z',\xi,\theta}$  on
$\Y=\Y_0\cup\Y_1\cup\ldots $. Namely, let $\xi\in(0,1)$ be an
additional parameter, and set
\begin{equation}\label{EquationMzztheta}
M_{z,z',\xi,\theta}(\lambda)=(1-\xi)^t\xi^{|\lambda|}
\frac{(z)_{\lambda,\theta}(z')_{\lambda,\theta}}{H(\lambda,\theta)H'(\lambda,\theta)}.
\end{equation}
\begin{prop} We have
$$
\sum\limits_{\lambda\in\Y}M_{z,z',\xi,\theta}(\lambda)=1.
$$
\end{prop}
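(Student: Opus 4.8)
The plan is to reduce the identity to the normalization of the finite-level measures $M_{z,z',\theta}^{(n)}$ supplied by Proposition \ref{Prop1.3}, combined with the generalized binomial series. First I would break the sum over $\Y$ according to the number of boxes, writing
$$
\sum_{\lambda\in\Y}M_{z,z',\xi,\theta}(\lambda)=\sum_{n=0}^{\infty}\sum_{\lambda\in\Y_n}M_{z,z',\xi,\theta}(\lambda).
$$
Since all parameters are chosen so that each $M_{z,z',\theta}^{(n)}$ is a probability measure (see the hypotheses of Proposition \ref{PropositionPositivity}), every term in this double sum is nonnegative, so the rearrangement is legitimate and no convergence issue arises at this stage. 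For $\lambda\in\Y_n$ one has $|\lambda|=n$, and comparing the definition (\ref{EquationMzztheta}) of $M_{z,z',\xi,\theta}$ with the definition (\ref{EquationVer4zmeasuren}) of $M_{z,z',\theta}^{(n)}$ shows that the ratio $(z)_{\lambda,\theta}(z')_{\lambda,\theta}/\bigl(H(\lambda,\theta)H'(\lambda,\theta)\bigr)$ equals $\tfrac{(t)_n}{n!}M_{z,z',\theta}^{(n)}(\lambda)$. This yields the pointwise identity
$$
M_{z,z',\xi,\theta}(\lambda)=(1-\xi)^t\,\frac{(t)_n}{n!}\,\xi^{n}\,M_{z,z',\theta}^{(n)}(\lambda),\qquad \lambda\in\Y_n.
$$

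Next I would carry out the inner summation over $\Y_n$. By Proposition \ref{Prop1.3}, $\sum_{\lambda\in\Y_n}M_{z,z',\theta}^{(n)}(\lambda)=1$ for every $n\geq1$, while for $n=0$ the only diagram is $\emptyset$ and all factors collapse to $1$, so that term contributes exactly $(1-\xi)^t$, matching the $n=0$ entry of the series below. Substituting, the dependence on $\lambda$ disappears and one is left with
$$
\sum_{\lambda\in\Y}M_{z,z',\xi,\theta}(\lambda)=(1-\xi)^t\sum_{n=0}^{\infty}\frac{(t)_n}{n!}\,\xi^{n}.
$$
The remaining sum is the generalized binomial series $\sum_{n\geq0}\frac{(t)_n}{n!}\xi^{n}=(1-\xi)^{-t}$, which converges for $\xi\in(0,1)$; hence the two powers of $(1-\xi)$ cancel and the total mass equals $1$.

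There is no real obstacle in this argument. The only point deserving care is the interchange of the two summations, but in the present setting this is automatic: under the standing assumption that the $M_{z,z',\theta}^{(n)}$ are probability measures all summands are nonnegative, so Tonelli's theorem applies and the order of summation is immaterial, with finiteness guaranteed by convergence of the binomial series for $\xi\in(0,1)$. The one identity imported from outside the excerpt is the binomial expansion of $(1-\xi)^{-t}$, which is standard.
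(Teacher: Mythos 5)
Your proof is correct and is essentially the paper's own argument: the paper's proof consists of the single line ``Follows immediately from Proposition \ref{Prop1.3}'', and your write-up is exactly the natural expansion of that remark --- grouping the sum by $|\lambda|=n$, applying the level-$n$ normalization, and summing the negative binomial series $\sum_{n\geq 0}\frac{(t)_n}{n!}\xi^n=(1-\xi)^{-t}$. The care you take with the $n=0$ term and with Tonelli is a sound (if routine) supplement to what the paper leaves implicit.
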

\begin{proof}
Follows immediately from Proposition \ref{Prop1.3}.
\end{proof}
Thus $M_{z,z',\xi,\theta}(\lambda)$ is a probability measure on
$\Y$. We will refer to $M_{z,z',\xi,\theta}(\lambda)$ as  the
mixed $z$-measure with the deformation (Jack) parameter $\theta$.

When both $z,z'$ go to infinity, expression
(\ref{EquationVer4zmeasuren}) has a limit
\begin{equation}\label{EquationPlancherelInfy}
M_{\Plancherel,\theta}^{(n)}(\lambda)=\frac{n!\theta^{n}}{H(\lambda,\theta)H'(\lambda,\theta)}
\end{equation}
called the Plancherel measure on $\Y_n$ with general $\theta>0$.
Proposition \ref{PropositionMSymmetries} implies that
\begin{equation}\label{PlancherelSymmetries}
M_{\Plancherel,\theta}^{(n)}(\lambda)=M_{\Plancherel,\frac{1}{\theta}}^{(n)}(\lambda').
\end{equation}
Instead of (\ref{EquationPlancherelInfy}), sometimes it is more
convenient to consider the Poissonized Plancherel measure with
general $\theta>0$,
\begin{equation}\label{EquationPlancherelInfyMixed}
M_{\Plancherel,\eta,\theta}(\lambda)=e^{-\eta}\left(\eta\right)^{|\lambda|}
\frac{\theta^{|\lambda|}}{H(\lambda,\theta)H'(\lambda,\theta)},
\end{equation}
where $\eta>0$.  Clearly, $M_{\Plancherel,\eta,\theta}(\lambda)$ is
a probability measure on the set $\Y$.
\begin{rem}
(1) Statistics of the Plancherel measure with the general Jack
parameter $\theta>0$ is discussed in   Matsumoto \cite{matsumoto}.
Matsumoto \cite{matsumoto} compares limiting distributions of rows
of random partitions with distributions of certain random
variables from a traceless Gaussian $\beta$-ensemble.\\
(2) When $\theta=1$ the poissonized Plancherel measure
$M_{\Plancherel,\eta,\theta}(\lambda)$, and the mixed $z$-measure
$M_{z,z',\xi,\theta}$ lead to dereminantal processes on
$\Z+\frac{1}{2}$, see Borodin, Okounkov and Olshanski
\cite{BorOkounkovOlshanski}, Johansson \cite{johansson-discrete
orthogonal ensembles}, Borodin and Olshanski \cite{BO-hyper}.\\
(3) The poissonized Plancherel measures and certain analogues of the
mixed $z$-measures on the strict partitions are considered in Petrov
\cite{petrov}. Petrov \cite{petrov} shows that such
measures lead to determinantal processes as well.\\
(4) For $\theta=\frac{1}{2}$ or $\theta=2$ the poissonized
Plancherel measure $M_{\Plancherel,\eta,\theta}(\lambda)$, and the
mixed $z$-measure $M_{z,z',\xi,\theta}$ lead to Pfaffian point
processes on $\Z+\frac{1}{2}$, see Strahov
\cite{Strahov-Matrix-Kernel, Strahov-z-measures}, and the
references therein.
\end{rem}
\section{Main results}\label{SectionMainResults}
\subsection{$L$-matrices}
Set $\X=\Z+\frac{1}{2}$, $\X_+=\Z_{\geq 0}+\frac{1}{2}$, and
$\X_-=\Z_{\leq 0}-\frac{1}{2}$. Introduce the parity on the sets
$\X_+=\Z_{\geq 0}+\frac{1}{2}$ and $\X_-=\Z_{\leq 0}-\frac{1}{2}$
referring to $\frac{1}{2}$ and $-\frac{1}{2}$  as to even
elements.

According to the decomposition of the set $\Z+\frac{1}{2}$ ,
$$
\Z+\frac{1}{2}=\Z_{\leq 0}-\frac{1}{2}\;\sqcup
\frac{1}{2}\sqcup\;\Z_{\geq 0}+\frac{3}{2},
$$
we write the matrix $L$ in the block form:
\begin{equation}\label{PstructureL}
L=\left[\begin{array}{ccc}
  L_{--} & L_{-0} & L_{-+} \\
  L_{0-} & L_{00} & L_{0+} \\
  L_{+-}& L_{+0} & L_{++} \\
\end{array}\right].
\end{equation}
We are interested in the matrices $L$ defined by
\begin{equation}\label{PmatrixL}
L=\left[\begin{array}{ccc}
  \mbox{E} & \mbox{A} & \mbox{B} \\
  -\mbox{A}^T & 0 & 0 \\
  -\mbox{B}^T & 0 & 0 \\
\end{array}\right]
\end{equation}
As usual, here E, A, B are the matrices with $2\times 2$ block
elements. Specifically,
\begin{equation}\label{PmatrixE}
\mbox{E}(x,y)=\left[\begin{array}{cc}
  \epsilon(x,y) & 0 \\
  0 & 0 \\
\end{array}\right],\;\;x,y\in\;\Z_{\leq 0}-\frac{1}{2},
\end{equation}
\begin{equation}\label{PmatrixA}
\mbox{A}(x,y)=\left[\begin{array}{cc}
  \epsilon(x,y) & 0 \\
  0 & \dfrac{h(x)h(y)}{x-y} \\
\end{array}\right],\;\;x\in\;\Z_{\leq 0}-\frac{1}{2},\;\; y=\frac{1}{2},
\end{equation}
\begin{equation}\label{PmatrixB}
\mbox{B}(x,y)=\left[\begin{array}{cc}
  0 & 0 \\
 \dfrac{h(x)h(y)}{x-y}  & \dfrac{h(x)h(y-\frac{1}{2})}{x- y+\frac{1}{2}} \\
\end{array}\right],\;\;x\in\;\Z_{\leq 0}-\frac{1}{2}, \;\;y\in\;\Z_{\geq 0}+\frac{3}{2}.
\end{equation}
The two-point function $\epsilon(x,y)$ in equations above is
antisymmetric, $\epsilon(x,y)=-\epsilon(y,x)$. When $x<y$,
\begin{equation}\label{Pepsilon}
\epsilon(x,y)=\left\{%
\begin{array}{ll}
    1, & x-\hbox{odd}, y-\hbox{even},\\
    0, & \hbox{otherwise.} \\
\end{array}%
\right.
\end{equation}
The function $h$ is nonnegative on $\Z+\frac{1}{2}$.
\subsection{Measures on partitions with the Jack parameter
$\theta=2$ as Pfaffian $L$-ensembles}\label{SectionMainResult2} We
define an embedding $\lambda\rightarrow X$ of the set $\Y$ of
Young diagrams into the set $\Conf(\Z+\frac{1}{2})_0$ of finite
configurations in $\Z+\frac{1}{2}$ as follows. Let $\lambda$ be a
Young diagram. Given $\lambda=(\lambda_1,\ldots,\lambda_l)$ we
denote by $\lambda\sqcup\lambda$  another Young diagram defined by
$$
\lambda\sqcup\lambda=\left(\lambda_1,\lambda_1,\lambda_2,\lambda_2,\ldots,\lambda_l,\lambda_l\right).
$$

Denote by $\left(P_1,\ldots,P_D\vert Q_1,\ldots,Q_D\right)$ the
usual Frobenius coordinates of $\lambda\sqcup\lambda$ (Macdonald
\cite{macdonald}, \S I.1). Thus $P_i$ is the number of squares in
the $i$th row to the right of the diagonal of
$\lambda\sqcup\lambda$, $Q_i$ is the number of squares in the $i$th
column below the diagonal of $\lambda\sqcup\lambda$, and
$i=1,\ldots, D$. Here $D$ is the number of boxes on the diagonal of
$\lambda\sqcup\lambda$. Given $\lambda$ consider the point
configuration $X=X_-\sqcup X_+$ on $\Z+\frac{1}{2}$ defined in terms
of the Frobenius coordinates $(P_1,\ldots,P_D\vert Q_1,\ldots, Q_D)$
as follows. If $D$ is even, then we set
\begin{equation}\label{X1}
X_+=\left(P_{D-1}+\frac{1}{2},
P_{D-3}+\frac{1}{2},\ldots,P_{1}+\frac{1}{2}\right).
\end{equation}
If $D$ is odd, then we set
\begin{equation}\label{X2}
X_-=\left(\frac{1}{2},P_{D-2}+\frac{1}{2},
P_{D-4}+\frac{1}{2},\ldots,P_{1}+\frac{1}{2}\right).
\end{equation}
(Observe that if $D$ is odd, then $P_D=0$.) In both cases (when
$D$ is either even or odd) we define $X_-$ as
\begin{equation}\label{X3}
X_-=\left(-Q_1-\frac{1}{2},
-Q_2-\frac{1}{2},\ldots,-Q_D-\frac{1}{2}\right).
\end{equation}
Equations (\ref{X1})-(\ref{X3}) define the embedding
$\lambda\rightarrow X$ of $\Y$ into $\Conf_0(\Z+\frac{1}{2})$.
Under this embedding any probability measure $M$ on $\Y$ turns
into a probability measure on $\Conf(\Z+\frac{1}{2})_0$. (Assume
that $X\in\Conf_0(\Z+\frac{1}{2})$, and assume that there is no a
Young diagram $\lambda$ such that $X$ is representable in terms of
Frobenius coordinates of $\lambda\sqcup\lambda$ by equations
(\ref{X1})-(\ref{X3}). Then we agree that the probability of $X$
is zero). Therefore we get a point process on $\Z+\frac{1}{2}$. We
will denote by $\underline{M}$ the point process obtained in this
way from a probability measure $M$ on $\Y$.

Let us introduce the following notation. For any complex $a$ and a
nonnegative integer $n$ we set
$$
[a]_n=\left\{%
\begin{array}{ll}
    (a+1)(a+3)\ldots (a+n-1), & n\;\hbox{is even,} \\
    a(a+2)\ldots (a+n-1), & n\;\hbox{is odd;} \\
    1, & n=0. \\
\end{array}%
\right.
$$
\begin{thm} \label{MainTheoremTheta2}(A) The point process $\underline{M}_{z,z',\xi,\theta=2}$ is
the Pfaffian $L$-ensemble in the sense of Section
\ref{SectionPfaffianLEnsembles}. The corresponding $L$-matrix is
defined by equations (\ref{PstructureL})-(\ref{Pepsilon}) with
\begin{equation}\label{hzmeasureTheta=2}
h(x)=\left\{%
\begin{array}{ll}
    \frac{[z+1]_{x-\frac{1}{2}}[z'+1]_{x-\frac{1}{2}}}{\Gamma(x+\frac{1}{2})}\xi^{\frac{x}{2}}, & x\in\Z_{\geq 0}+\frac{1}{2}, \\
     \frac{[-z]_{-x-\frac{1}{2}}[-z']_{-x-\frac{1}{2}}}{\Gamma(-x+\frac{1}{2})}\xi^{-\frac{x}{2}}, & x\in\Z_{\leq 0}-\frac{1}{2}.\\
\end{array}%
\right.
\end{equation}
Moreover, we have
$$
\Pf(J+L)=(1-\xi)^{-\frac{zz'}{2}}.
$$
(B) The point process $\underline{M}_{\Plancherel,\eta,\theta=2}$
is the Pfaffian $L$-ensemble in the sense of Section
\ref{SectionPfaffianLEnsembles}. The corresponding $L$-matrix is
defined by equations (\ref{PstructureL})-(\ref{Pepsilon}) with
\begin{equation}\label{hPlancherel}
h(x)=\frac{(2\eta)^{\frac{1}{2}(|x|+\frac{1}{2})}}{\Gamma(|x|+\frac{1}{2})},\;
x\in\Z+\frac{1}{2}.
\end{equation}
Moreover, we have
$$
\Pf(J+L)=e^{\eta}.
$$
\end{thm}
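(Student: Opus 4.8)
The plan is to reduce both parts to a single per-diagram identity and then recover the normalization $\Pf(J+L)$ by summation. Writing $X$ for the configuration attached to a diagram $\lambda$ by the embedding \eqref{X1}--\eqref{X3}, I would first prove that, for the weight $h$ of \eqref{hzmeasureTheta=2},
\[
\Pf L(X\vert X)=\xi^{|\lambda|}\,\frac{(z)_{\lambda,2}\,(z')_{\lambda,2}}{H(\lambda,2)\,H'(\lambda,2)},
\]
and that $\Pf L(X\vert X)=0$ for every finite configuration lying outside the image of \eqref{X1}--\eqref{X3}. Granting this, nonnegativity of $\Pf L(X\vert X)$ is immediate, since on the image its value equals $M_{z,z',\xi,2}(\lambda)/(1-\xi)^{t}\ge 0$ by Proposition \ref{PropositionPositivity}; and because the embedding is a bijection onto its image, summing over all finite configurations collapses to a sum over $\lambda$, which by $\sum_\lambda M_{z,z',\xi,2}(\lambda)=1$ (Proposition \ref{Prop1.3}) yields the summability condition \eqref{Condition} together with
\[
\Pf(J+L)=\sum_{X}\Pf L(X\vert X)=(1-\xi)^{-t}=(1-\xi)^{-zz'/2}.
\]
Dividing the displayed identity by this normalization reproduces $M_{z,z',\xi,2}(\lambda)$ through \eqref{EquationMzztheta}, so $\underline{M}_{z,z',\xi,2}$ is the asserted Pfaffian $L$-ensemble.

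The first half of the work is a general evaluation of $\Pf L(X\vert X)$ for an arbitrary nonnegative $h$, using the special block form \eqref{PmatrixL}--\eqref{Pepsilon} developed in Section \ref{SectionSpecialClass}. The key structural feature is that the lower-right $2\times 2$ blocks of $L$ vanish, so $L(X\vert X)$ decouples into two non-interacting parts whose Pfaffians multiply. In the first (``primed'') part the primed slots of the points of $X_-$ and of $\tfrac12$ pair through the parity function $\epsilon$ of \eqref{Pepsilon}; its Pfaffian is a combinatorial sign that is nonzero precisely on the configurations produced by \eqref{X1}--\eqref{X3} and vanishes otherwise, which is exactly what forces $\Pf L(X\vert X)=0$ off the image. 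In the second (``doubled'') part each double-primed slot of a point of $X_-$ is matched, through the Cauchy-type entries $h(x)h(y)/(x-y)$ of blocks $\mathrm A$ and $\mathrm B$, with a slot of $X_+$ or of $\tfrac12$; this part is the Pfaffian of a bipartite block, equal up to sign to a Cauchy determinant, which is evaluated in closed form and supplies the $h$-weights and the requisite rational factors. Throughout one must track the parity of the number $D$ of diagonal boxes of $\lambda\sqcup\lambda$ (it decides whether $\tfrac12\in X$) and the $\tfrac12$-shift in block $\mathrm B$ of \eqref{PmatrixB}, which encodes the doubling.

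The second half rewrites the right-hand side of the displayed identity in the modified Frobenius coordinates $(P_1,\dots,P_D\mid Q_1,\dots,Q_D)$ of $\lambda\sqcup\lambda$ (Sections \ref{SectionPlancherelLebsenbles} and \ref{SectionZmeausuresLebsenbles}). Here the content product $(z)_{\lambda,2}=\prod_{(i,j)\in\lambda}\bigl(z+(j-1)-2(i-1)\bigr)$ and the two hook products $H(\lambda,2),H'(\lambda,2)$ are made to factor along the arms and legs of the doubled diagram, Proposition \ref{PropositionHH} being available to rewrite the $\theta=2$ quantities when convenient. The combined effect of $\theta=2$ and the doubling is exactly to turn the ordinary Pochhammer factors into the step-two symbols $[z+1]_{\cdot},[z'+1]_{\cdot},[-z]_{\cdot},[-z']_{\cdot}$, the hook contributions into the Gamma factors $\Gamma(x+\tfrac12)$ and $\Gamma(-x+\tfrac12)$, and $\xi^{|\lambda|}$ into the factors $\xi^{x/2},\xi^{-x/2}$ distributed over the coordinates, reproducing the $h$ of \eqref{hzmeasureTheta=2}. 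Matching this factored form against the Cauchy determinant of the previous paragraph completes part (A). Part (B) is the $z,z'\to\infty$ (Plancherel) degeneration of \eqref{EquationMzztheta}, under which the bracket symbols in $h$ reduce to the single power of $2\eta$ in \eqref{hPlancherel} and the normalization becomes $e^{\eta}$; I would obtain it either by repeating the argument with this $h$ or by a limit transition from (A).

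The main obstacle is the matching in the last paragraph: proving the exact factorization of $(z)_{\lambda,2},(z')_{\lambda,2},H(\lambda,2),H'(\lambda,2)$ across the Frobenius coordinates of $\lambda\sqcup\lambda$ and verifying term by term that the resulting Gamma- and difference-factors coincide with the output of the Cauchy determinant, including the $\tfrac12$-shift of block $\mathrm B$ and the even/odd $D$ dichotomy. The remaining steps are either formal (the summation giving $\Pf(J+L)$ and the two admissibility conditions) or standard (the decoupling of $L(X\vert X)$ and the Cauchy evaluation), so the coincidence of the two product formulas is where the substance of the proof lies.
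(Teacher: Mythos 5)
Your proposal is correct and follows essentially the same route as the paper: a general evaluation of $\Pf L(X|X)$ for arbitrary $h$ via decoupling into the $\epsilon$-parity part and a Cauchy determinant (this is the paper's Proposition \ref{PTheoremReductionToPfaffianProcesses}), combined with the rewriting of $M_{z,z',\xi,\theta=2}$ in Frobenius coordinates of $\lambda\sqcup\lambda$ with step-two Pochhammer symbols (the paper's Propositions \ref{PropositionHookFrobenius} and \ref{PropositionMFrobeniuszMeasures}), and finally summation over $\Y$ to identify $\Pf(J+L)$. The only cosmetic difference is that the paper proves part (B) directly by the same argument rather than as a degeneration of part (A), which is one of the two options you offer.
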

\subsection{Measures on partitions with the Jack parameter
$\theta=\frac{1}{2}$ as Pfaffian $L$-ensembles}

In this case we define an embedding $\lambda\rightarrow X'$ of the
set $\Y$ of Young diagrams into the set $\Conf(\Z+\frac{1}{2})_0$
of finite configurations in $\Z+\frac{1}{2}$ in a slightly
different way. Let $\lambda$ be a Young diagram. Denote by
$\left(P_1',\ldots,P_D'\vert Q_1',\ldots,Q_D'\right)$ the usual
Frobenius coordinates of $\lambda'\sqcup\lambda'$.  Given
$\lambda$ consider the point configuration $X'=X'_-\sqcup X'_+$ on
$\Z+\frac{1}{2}$ defined in terms of $(P_1',\ldots,P_D'\vert
Q_1',\ldots, Q_D')$ as follows. If $D$ is even, then we set
\begin{equation}\label{X1'}
X_+'=\left(P'_{D-1}+\frac{1}{2},
P'_{D-3}+\frac{1}{2},\ldots,P'_{1}+\frac{1}{2}\right).
\end{equation}
If $D$ is odd, then we set
\begin{equation}\label{X2'}
X'_-=\left(\frac{1}{2},P'_{D-2}+\frac{1}{2},
P'_{D-4}+\frac{1}{2},\ldots,P'_{1}+\frac{1}{2}\right).
\end{equation}
(Observe that if $D$ is odd, then $P'_D=0$.) In both case (when
$D$ is either even or odd) we define $X'_-$ as
\begin{equation}\label{X3'}
X'_-=\left(-Q'_1-\frac{1}{2},
-Q'_2-\frac{1}{2},\ldots,-Q'_D-\frac{1}{2}\right).
\end{equation}
Equations (\ref{X1'})-(\ref{X3'}) define the embedding
$\lambda\rightarrow X'$ of $\Y$ into $\Conf(\Z+\frac{1}{2})_0$.
Under the embedding  we get a point process on $\Z+\frac{1}{2}$.
We will denote by the same symbol $\underline{M}$ (as in Section
\ref{SectionMainResult2}) the point process obtained in this way
from a probability measure $M$ on $\Y$.
\begin{thm}\label{MainTheoremThetaHalf} (A) The point process $\underline{M}_{z,z',\xi,\theta=\frac{1}{2}}$ is
the Pfaffian $L$-ensemble in the sense of Section
\ref{SectionPfaffianLEnsembles}. The corresponding $L$-matrix is
defined by equations (\ref{PstructureL})-(\ref{Pepsilon}) with
\begin{equation}\label{hzmeasureTheta=Half}
h(x)=\left\{%
\begin{array}{ll}
    \frac{[-2z+1]_{x-\frac{1}{2}}[-2z'+1]_{x-\frac{1}{2}}}{\Gamma(x+\frac{1}{2})}\xi^{\frac{x}{2}}, & x\in\Z_{\geq 0}+\frac{1}{2}, \\
     \frac{[2z]_{-x-\frac{1}{2}}[2z']_{-x-\frac{1}{2}}}{\Gamma(-x+\frac{1}{2})}\xi^{-\frac{x}{2}}, & x\in\Z_{\leq 0}-\frac{1}{2}.\\
\end{array}%
\right.
\end{equation}
Moreover, we have
$$
\Pf(J+L)=(1-\xi)^{-2zz'}.
$$
(B) The point process
$\underline{M}_{\Plancherel,\eta,\theta=\frac{1}{2}}$ is the
Pfaffian $L$-ensemble in the sense of Section
\ref{SectionPfaffianLEnsembles}. The corresponding $L$-matrix is
the same as in Theorem \ref{MainTheoremTheta2}: it is defined by
equations (\ref{PstructureL})-(\ref{Pepsilon}) with the function
$h$ defined by equation (\ref{hPlancherel}).
\end{thm}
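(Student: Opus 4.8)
The plan is to deduce Theorem \ref{MainTheoremThetaHalf} from Theorem \ref{MainTheoremTheta2} using the transposition symmetry of the measures, so that no new Pfaffian computation is required. The starting point is Proposition \ref{PropositionMSymmetries}, which reads $M_{z,z',\theta}^{(n)}(\lambda)=M_{-z/\theta,-z'/\theta,1/\theta}^{(n)}(\lambda')$, together with its Plancherel counterpart (\ref{PlancherelSymmetries}). Setting $\theta=\frac{1}{2}$ (so that $1/\theta=2$, $-z/\theta=-2z$, $-z'/\theta=-2z'$) gives
\[
M_{z,z',\frac{1}{2}}^{(n)}(\lambda)=M_{-2z,-2z',2}^{(n)}(\lambda').
\]
First I would promote this to the mixed and Poissonized measures. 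The Jack-deformed parameter $t=zz'/\theta$ is invariant under the substitution, since $(-z/\theta)(-z'/\theta)/(1/\theta)=zz'/\theta$, and $|\lambda|=|\lambda'|$, so the mixing prefactors $(1-\xi)^{t}\xi^{|\lambda|}$ in (\ref{EquationMzztheta}) agree on both sides. Hence
\[
M_{z,z',\xi,\frac{1}{2}}(\lambda)=M_{-2z,-2z',\xi,2}(\lambda'),\qquad M_{\Plancherel,\eta,\frac{1}{2}}(\lambda)=M_{\Plancherel,\eta,2}(\lambda').
\]

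The second step is to observe that the embedding $\lambda\mapsto X'$ used for $\theta=\frac{1}{2}$ was designed precisely to intertwine with transposition. By definition $X'$ is read off from the Frobenius coordinates of $\lambda'\sqcup\lambda'$ through equations (\ref{X1'})--(\ref{X3'}), which are verbatim equations (\ref{X1})--(\ref{X3}) applied to the diagram $\lambda'$. Thus the $\theta=\frac{1}{2}$ embedding of $\lambda$ is literally the $\theta=2$ embedding of $\lambda'$. Combining this with the measure identities above, the induced point processes coincide:
\[
\underline{M}_{z,z',\xi,\frac{1}{2}}=\underline{M}_{-2z,-2z',\xi,2},\qquad \underline{M}_{\Plancherel,\eta,\frac{1}{2}}=\underline{M}_{\Plancherel,\eta,2}.
\]

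It then suffices to apply Theorem \ref{MainTheoremTheta2} with the parameters $(z,z')$ replaced by $(-2z,-2z')$ in part (A), and left unchanged in part (B), and to read off the resulting data. For part (A), substituting $z\mapsto-2z$, $z'\mapsto-2z'$ into (\ref{hzmeasureTheta=2}) turns $[z+1]_{x-\frac{1}{2}}$ into $[-2z+1]_{x-\frac{1}{2}}$ on $\X_+$ and $[-z]_{-x-\frac{1}{2}}$ into $[2z]_{-x-\frac{1}{2}}$ on $\X_-$, which is exactly (\ref{hzmeasureTheta=Half}); likewise $\Pf(J+L)=(1-\xi)^{-(-2z)(-2z')/2}=(1-\xi)^{-2zz'}$, as claimed. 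For part (B) the function $h$ in (\ref{hPlancherel}) and the value $\Pf(J+L)=e^{\eta}$ depend only on $\eta$, so they are unchanged, giving the stated coincidence of $L$-matrices with Theorem \ref{MainTheoremTheta2}.

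I expect the only genuinely delicate step to be the first one: checking that the duality of Proposition \ref{PropositionMSymmetries} lifts cleanly to the mixed measures with the mixing parameter $\xi$ (and $\eta$) unaltered, which hinges on the invariance of $t$ and on $|\lambda|=|\lambda'|$. One should also note that the positivity and summability requirements (\ref{Condition}) defining a Pfaffian $L$-ensemble transfer automatically: admissibility of $(z,z')$ for $\theta=\frac{1}{2}$ in Proposition \ref{PropositionPositivity} is equivalent, via the same symmetry, to admissibility of $(-2z,-2z')$ for $\theta=2$, and the finiteness of $\Pf(J+L)$ is already supplied by Theorem \ref{MainTheoremTheta2}. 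After these checks the result is a formal corollary of Theorem \ref{MainTheoremTheta2}.
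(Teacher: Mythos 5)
Your proposal is correct, and it is organized genuinely differently from the paper's own proof, even though both rest on the same transposition duality. The paper uses the relation $M_{z,z',\theta=\frac{1}{2},\xi}(\lambda)=M_{-2z,-2z',\theta=2,\xi}(\lambda')$ (derived, as you do, from Proposition \ref{PropositionMSymmetries}) one level lower: only to obtain explicit Frobenius-coordinate formulas for the $\theta=\frac{1}{2}$ measures (Proposition \ref{PropositionPlancherelFrobenius} (B) and Proposition \ref{PropositionMFrobeniuszMeasures} (B)), after which it repeats, essentially verbatim, the matching argument used for $\theta=2$: identify the measure with a constant multiple of $\Pf\, L(X'|X')$ via Proposition \ref{PTheoremReductionToPfaffianProcesses}, then evaluate $\Pf(J+L)$ by summing over all configurations and using that the measure has total mass one. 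You instead push the duality all the way up to the point processes: since the embedding (\ref{X1'})--(\ref{X3'}) applied to $\lambda$ is literally the embedding (\ref{X1})--(\ref{X3}) applied to $\lambda'$, and since the mixing weights depend only on $t=zz'/\theta$ and $|\lambda|$, both invariant under $(z,z',\theta,\lambda)\mapsto(-z/\theta,-z'/\theta,1/\theta,\lambda')$, the processes $\underline{M}_{z,z',\xi,\theta=\frac{1}{2}}$ and $\underline{M}_{-2z,-2z',\xi,\theta=2}$ coincide outright, so Theorem \ref{MainTheoremTheta2} can be cited as a black box after the substitution $(z,z')\mapsto(-2z,-2z')$; your bookkeeping for $h$ and for $\Pf(J+L)=(1-\xi)^{-(-2z)(-2z')/2}=(1-\xi)^{-2zz'}$ is right. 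Your route is more economical, since no Pfaffian identity is re-derived and no re-summation is performed; your admissibility remark can even be shortened, as $M_{-2z,-2z',\xi,\theta=2}$ is a probability measure simply because it is the pushforward under transposition of the probability measure $M_{z,z',\xi,\theta=\frac{1}{2}}$. What the paper's longer route buys is the free-standing Frobenius-coordinate expressions for the $\theta=\frac{1}{2}$ measures, which have independent interest; what yours buys is the clean structural statement that the $\theta=\frac{1}{2}$ processes are not merely analogous to, but identical with, reparametrized $\theta=2$ processes.
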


\section{Special class of Pfaffian
$L$-ensembles}\label{SectionSpecialClass} In this Section we
consider the Pfaffian $L$-ensemble on $\Z+\frac{1}{2}$ whose
$L$-matrix is defined by equations
(\ref{PstructureL})-(\ref{Pepsilon}).

Configurations $X\in\Conf_0(\Z+\frac{1}{2})$ can be divided into
two classes. The first class consists of configurations which do
not include the point $\frac{1}{2}$. Such configurations have the
form $X=X_-\sqcup X_+$, $X_+=(x_1^+,x_2^+,\ldots )$,
$x_1^+>\frac{1}{2}$. The second class consists of configurations
that include the point $\frac{1}{2}$. For such configuration
$X_+=(\frac{1}{2},x_1^+,x_2^+,\ldots )$. For any
$X\subset\Conf_0(\Z+\frac{1}{2})$ denote by $\TX$ the
configuration defined by
\begin{equation}
\TX=\TX_-\sqcup\TX_+, \nonumber
\end{equation}
\begin{equation}
\TX_-=X_-,\nonumber
\end{equation}
\begin{equation}
\TX_+=\left\{%
\begin{array}{ll}
    (x_1^+-1,x_1^+,x_2^+-1,x_2^+,\ldots ), &\;\;\; X_+\cap\frac{1}{2}=\emptyset, \\
     (\frac{1}{2},x_1^+-1,x_1^+,x_2^+-1,x_2^+,\ldots ), &\;\;\; X_+\cap\frac{1}{2}\neq\emptyset.\\
\end{array}%
\right. \nonumber
\end{equation}
\begin{defn}\label{DefinitionConfigurations}
We say that $X\in \Conf^{L}(\Z+\frac{1}{2})$ if
\begin{itemize}
\item $X\in \Conf_0(\Z+\frac{1}{2})$ \item
$X_+=(x_1^+<x_2^+<\ldots )$
    \item all points of $\TX_+$ are different
    \item $\vert \TX_-\vert=\vert \TX_+\vert$
    \item $\TX_-=(x_1^-<x_2^-<\ldots )$, where $x_i^-$ has the same
    parity as $i$.
\end{itemize}
\end{defn}
Let us introduce the following notation. Set
$\prod(A;B)\equiv\prod\limits_{i=1}^k\prod\limits_{j=1}^l(a_i-b_j)$
for any two sets $A=(a_1,\ldots ,a_k)$, $B=(b_1,\ldots ,b_l)$, let
$V(X)$ be the Vandermonde determinant associated with a set $X$,
\begin{equation}
V(X)=\prod\limits_{1\leq i<j\leq N}(x_i-x_j),\;\;\; X=(x_1,\ldots
,x_N),\nonumber
\end{equation}
 and  set $h(X)=\prod\limits_{j=1}^Nh(x_j)$.  Definition (\ref{DefinitionConfigurations})
 is
justified by the following statement.
\begin{prop}\label{PTheoremReductionToPfaffianProcesses} With $L$ given by equations
(\ref{PstructureL})-(\ref{Pepsilon}) we have
\begin{equation}\label{PReductionToPfaffianProcessesEquation}
\Pf L(X|X)=\dfrac{V(\TX_-)V(\TX_+)}{\prod(\TX_+;\TX_-)}\;h(\TX)
\end{equation}
for $X\in Conf^{L}(\Z+\frac{1}{2})$ and $0$ for all other $X\in
\Conf_0(\Z+\frac{1}{2})$.
\end{prop}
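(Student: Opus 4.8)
The plan is to exploit the sparse block structure of $L$ in (\ref{PmatrixL})--(\ref{Pepsilon}) to factor $\Pf L(X|X)$ as a product of two decoupled Pfaffians. Each point contributes a primed and a doubly-primed row/column, and a direct inspection of (\ref{PmatrixE})--(\ref{PmatrixB}) reveals a clean separation: the primed index $x'$ of a point $x\in\X_-$, together with the primed index of $\frac{1}{2}$, couples (through $\epsilon$) only to other such primed indices, while all the remaining indices---$x''$ for $x\in\X_-$, the index $(\frac{1}{2})''$, and both $y',y''$ for the points $y>\frac{1}{2}$---couple only among themselves. Since these two families never interact, after the reordering that lists the first family before the second the matrix $L(X|X)$ is block diagonal, and $\Pf L(X|X)=\pm\,\Pf\mathcal{E}\cdot\Pf\mathcal{G}$, where $\mathcal{E}$ carries the $\epsilon$-entries on $X_-\cup\{\frac{1}{2}\}$ and $\mathcal{G}$ carries the $h$-entries on the complementary family.

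I would first dispose of $\mathcal{E}$ by establishing the auxiliary fact that $\Pf[\epsilon(z_i,z_j)]$, taken over an increasing tuple $z_1<\dots<z_{2r}$, equals $1$ when $z_i$ has the parity of $i$ and $0$ otherwise. This holds because $\epsilon(z_i,z_j)=1$ exactly when $i$ is odd, $j$ is even and $i<j$; the only perfect matching built entirely from such pairs in the alternating case is the greedy one $(1,2)(3,4)\cdots$ of sign $+1$, whereas in every other case the admissible matchings are either absent or cancel in pairs. The size condition $|\TX_-|=|\TX_+|$ forces $|X_-|$ to be odd precisely when $\frac{1}{2}\in X_+$, so the point $\frac{1}{2}$ (declared even) occupies the correct even slot, and the parity clause of Definition \ref{DefinitionConfigurations} makes $X_-\cup\{\frac{1}{2}\}$ exactly such an alternating tuple; hence $\Pf\mathcal{E}=1$, while failure of the parity clause gives $\Pf\mathcal{E}=0$ and kills the whole Pfaffian.

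For $\mathcal{G}$ the same reading of (\ref{PmatrixA})--(\ref{PmatrixB}) shows that its only nonzero entries link an index $x''$, $x\in\X_-$, to one of $(\frac{1}{2})''$, $y'$, $y''$, while these latter indices are mutually orthogonal. Thus $\mathcal{G}$ has the off-diagonal form $\left[\begin{smallmatrix}0&C\\-C^{T}&0\end{smallmatrix}\right]$, whose Pfaffian equals $\pm\det C$ when the two sides match in size and vanishes otherwise---the matching-size requirement being exactly $|\TX_-|=|\TX_+|$. Assigning to $(\frac{1}{2})''$, $y'$, $y''$ the points $\frac{1}{2}$, $y$, $y-1$ of $\TX_+$ identifies the columns of $C$ with $\TX_+$ and presents $C$ as the Cauchy-type matrix $C(x,w)=\dfrac{h(x)h(w)}{x-w}$, $x\in\TX_-$, $w\in\TX_+$. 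Factoring out the values of $h$ and invoking the Cauchy determinant then yields
\[
\det C=h(\TX_-)\,h(\TX_+)\,\frac{V(\TX_-)V(\TX_+)}{\prod(\TX_+;\TX_-)}=\frac{V(\TX_-)V(\TX_+)}{\prod(\TX_+;\TX_-)}\,h(\TX),
\]
which simultaneously exhibits the vanishing when the points of $\TX_+$ fail to be distinct, since a repeated column forces $\det C=0$ (equivalently $V(\TX_+)=0$).

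Multiplying the two factors gives (\ref{PReductionToPfaffianProcessesEquation}), and any configuration outside $\Conf^{L}(\Z+\frac{1}{2})$ is annihilated by one of the three mechanisms above (wrong parity, unequal sizes, or coinciding points of $\TX_+$). The one genuinely delicate point I anticipate is the simultaneous control of all the signs: the permutation sign from the block-diagonalization, the factor $(-1)^{r(r-1)/2}$ in $\Pf\left[\begin{smallmatrix}0&C\\-C^{T}&0\end{smallmatrix}\right]=(-1)^{r(r-1)/2}\det C$, and the sign in the Cauchy evaluation must all be tracked together and shown to collapse to $+1$. This is plausible a priori: writing $r=|\TX_-|=|\TX_+|$, the two Vandermonde signs coincide and cancel, $\prod(\TX_+;\TX_-)>0$ because $\TX_+$ lies to the right of the origin and $\TX_-$ to the left, and $h\ge 0$, so the right-hand side is manifestly nonnegative, in agreement with the standing hypothesis $\Pf L(X|X)\ge 0$; verifying that the accumulated combinatorial signs indeed reduce to $+1$ is the main computation to carry out.
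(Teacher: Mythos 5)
Your decomposition is exactly the paper's (which reproduces Borodin--Strahov, Section 3.2): the primed indices of $X_-$, together with $(\tfrac{1}{2})'$, decouple into a pure $\epsilon$-Pfaffian; the remaining indices form a matrix of the shape $\bigl[\begin{smallmatrix}0&Q\\-Q^{T}&0\end{smallmatrix}\bigr]$ with $Q$ of Cauchy type; the parity clause of Definition \ref{DefinitionConfigurations} comes from the $\epsilon$-block and the size condition $|\TX_-|=|\TX_+|$ from the off-diagonal structure. Merging the paper's two cases ($\tfrac12\in X$ or not) into one uniform argument is a mild streamlining. However, your proof is incomplete, and you say so yourself: the proposition asserts an exact identity with sign $+1$, and you never determine the sign, deferring it as ``the main computation to carry out.'' The fallback you offer is circular: the ``standing hypothesis'' $\Pf L(X|X)\ge 0$ of Section \ref{SectionPfaffianLEnsembles} cannot be invoked here, because for this concrete $L$ that property is exactly what Proposition \ref{PTheoremReductionToPfaffianProcesses} is needed to establish (it follows from the manifest nonnegativity of the right-hand side of (\ref{PReductionToPfaffianProcessesEquation})). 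So nonnegativity of the left-hand side is not available until the sign bookkeeping is done unconditionally.

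The missing computation is precisely what the paper carries out: with $d=|\TX_-|=|\TX_+|$, pulling the $\epsilon$-coupled rows and columns to the front costs $(-1)^{d(d-1)/2}$; the alternating $\epsilon$-Pfaffian equals $1$; and the passage from $\Pf\bigl[\begin{smallmatrix}0&Q\\-Q^{T}&0\end{smallmatrix}\bigr]$ through the Cauchy determinant produces a further factor $(-1)^{d/2}(-1)^{d(d-1)/2}$ relative to $V(\TX_-)V(\TX_+)/\prod(\TX_+;\TX_-)$, so that the accumulated sign is $(-1)^{d^2/2}=1$ for $d$ even (the case $\tfrac12\notin X$), with the parallel computation when $d$ is odd. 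Until you perform this (configuration-by-configuration, since the reordering sign depends on $d$), you have only proved the identity up to sign. A secondary soft spot: your claim that in the non-alternating case the admissible matchings of the $\epsilon$-Pfaffian ``are either absent or cancel in pairs'' is asserted, not proved; the cleaner route (used in the paper) is that an even $x_1^-$ forces a zero row, and two consecutive points of equal parity force two equal rows, either of which kills the Pfaffian outright.
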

\begin{proof}
This fact was first proved in Borodin and Strahov
\cite{BorodinStrahov-CharacteristicPolynomials}, Section 3.2. We
reproduce here this proof (with minor changes) to make the paper
self-contained.

The positive integer $d=\vert\TX_-\vert=\vert\TX_+\vert$ can be
even or odd, depending on whether $X$ includes the point
$\frak x=\frac{1}{2}$ or not. So we consider two cases.\\
\textbf{Case 1}. $X\cap \frak x=\emptyset$\\
Given copies $X'$, $X''$ of $X\in\X$ in $\X'$, $\X''$ we denote by
$X'\uplus X''$ the set $(x_1', x_1'', x_2', x_2'',\ldots )$. Then
we have
\begin{equation}
\begin{split}
\PF\;L(X\vert X)&=\PF\;L\biggl[X_-\sqcup X_+\arrowvert X_-\sqcup
X_+\biggr]\\
&=\PF\;L\biggl[\left(X_-'\uplus X_{-}''\right)\sqcup
\left(X_{+}'\uplus X_{+}''\right)\arrowvert \left(X_-'\uplus
X_{-}''\right)\sqcup \left(X_{+}'\uplus X_{+}''\right)\biggr]\\
&=(-)^{\frac{d(d-1)}{2}}\cdot \PF\;L\biggl[X_-'\arrowvert X_-'\biggr]\\
&\times \PF\;L\biggl[ X_{-}''\sqcup \left(X_{+}'\uplus
X_{+}''\right)\arrowvert  X_{-}''\sqcup \left(X_{+}'\uplus
X_{+}''\right)\biggr]
\end{split}
\nonumber
\end{equation}
as the function $L(x,y)=0$ for any $x\in X_-'$ and any $y$ which
does not belong to $X_-'$ (see equations (\ref{PLl})-(\ref{PLl1})
and (\ref{PmatrixL})-(\ref{Pepsilon})). We note that $L(x,y)=0$,
if $x,y\in\;\X_-''$, or if $x,y\in X_+'\uplus X_+''$ Therefore
$\vert X_-''\vert =\vert X_+'\arrowvert +\vert X_+''\vert $, or
$\vert X_-\vert=2\vert X_+\vert $, which means that
$\vert\TX_+\vert =\vert \TX_-\vert $.

Consider $\PF\;L\biggl[X_-'\vert X_-'\biggr]$. Note that the
matrix $L\biggl[X_-'\vert X_-'\biggr]$ is even dimensional, if
$\vert X_- \vert=2\vert X_+\vert$. Moreover the matrix
$L\biggl[X_-'\vert X_-'\biggr]$ is the matrix whose $(i,j)$ entry
is, by definition, given by $\epsilon (x_i^-,x_j^-)$. Clearly, if
$x_1^-$ is even, the first row of this matrix consists of  zeros
only. Thus, if $\PF\;L\biggl[X_-'\vert X_-'\biggr]\neq 0$, $x_1^-$
must be odd. Now assume that $x_{2i-1}^-$ and $x_{2i}^-$ have the
same parity. In this case $(2i-1)^{\mbox{st}}$ and
$2i^{\mbox{th}}$ rows of the matrix $L\biggl[X_-'\vert
X_-'\biggr]$ are equal to each other. Therefore, if
$\PF\;L\biggl[X_-'\vert X_-'\biggr]\neq 0$ the elements of the set
$\TX_-=(x_1^-,x_2^-,\ldots )$ are such that $x_1^-$ is odd,
$x_2^-$ is even, $x_3^-$ is odd and so on. This proves the
condition on the parity for the configurations in
$\Conf^{L}(\Z+\frac{1}{2})$. Moreover, using the definition of
Pfaffian it is not hard to conclude that $\PF\;L\biggl[X_-'\vert
X_-'\biggr]=1$ for the configurations with non-zero probabilities.

Since $\vert X_-''\vert =\vert X_+'\vert +\vert X_+''\vert $ the
matrix $L\biggl[ X_{-}''\sqcup \left(X_{+}'\uplus
X_{+}''\right)\arrowvert  X_{-}''\sqcup \left(X_{+}'\uplus
X_{+}''\right)\biggr]$ has the block structure:
\begin{equation}
\biggl[\begin{array}{cc}
  \mathbb{O}_{d\times d} & Q_{d\times d} \\
  -Q^T_{d\times d} & \mathbb{O}_{d\times d} \\
\end{array}\biggr]
\nonumber
\end{equation}
with
 \begin{equation}
 Q_{d\times d}=\left[\begin{array}{ccccc}
  \dfrac{h(x_1^-)h(x_1^+)}{x_1^--x_1^+} & \dfrac{h(x_1^-)h({^lx}_1^+)}{x_1^--{^lx}_1^+} &
   \ldots & \dfrac{h(x_1^-)h(x_{d/2}^+)}{x_1^--x_{d/2}^+} & \dfrac{h(x_1^-)h({^lx}_{d/2}^+)}{x_1^--{^lx}_{d/2}^+} \\
  \vdots &  & & &  \\
  \dfrac{h(x_d^-)h(x_1^+)}{x_d^--x_1^+} & \dfrac{h(d_1^-)h({^lx}_1^+)}{x_d^--{^lx}_1^+} &
   \ldots & \dfrac{h(x_d^-)h(x_{d/2}^+)}{x_d^--x_{d/2}^+} &  \dfrac{h(x_d^-)h({^lx}_{d/2}^+)}{x_d^--{^lx}_{d/2}^+}\\
\end{array}\right],\nonumber
\end{equation}
where $^lx=x-1$ ($d$ is even). Thus we have
\begin{equation}
\begin{split}
\PF\;L(X\vert X)&=(-)^{\frac{d(d-1)}{2}}
\PF\;\left[\begin{array}{cc}
  \mathbb{O}_{d\times d} & Q_{d\times d} \\
  -Q^T_{d\times d} & \mathbb{O}_{d\times d} \\
\end{array}\right]= \mbox{det}\;Q_{d\times
d}\\
&=(-)^{\frac{d}{2}}(-)^{\frac{d(d-1)}{2}}\;\dfrac{V(\TX_-)V(\TX_+)}{\prod(\TX_+;\TX_-)}\;h(\TX)
\end{split}\nonumber
\end{equation}
where we have used the formula for the Cauchy determinant. Noting
that $(-)^{\frac{d(d-1)}{2}+\frac{d}{2}}=(-)^{\frac{d^2}{2}}=1$
(as
$d$ is even) we obtain the formula stated in the Theorem.\\
\textbf{Case 2.} $X\cap \frak{x}\neq 0$\\
The proof is very similar. We observe that any configuration $X$
has a form
\begin{equation}
X=X_-\sqcup\frak{x}\sqcup X_+ \nonumber
\end{equation}
Then
\begin{equation}
\begin{split}
&\Pf\;L(X\vert X)=\PF\;L\biggl[X_-\sqcup\frak{x}\sqcup X_+\vert
X_-\sqcup\frak{x}\sqcup
X_+\biggr]=\\
&\Pf\;L\biggl[\left(X_-'\uplus X_{-}''\right)\sqcup
(\frak{x}',\frak{x}'')\sqcup \left(X_{+}'\uplus
X_{+}''\right)\vert \left(X_-'\uplus
X_{-}''\right)\sqcup (\frak{x}',\frak{x}'')\sqcup\left(X_{+}'\uplus X_{+}''\right)\biggr]\\
&=(-)^{\frac{d(d-1)}{2}}\cdot \PF\;L\biggl[X_-',\frak{x}'\vert X_-',\frak{x}'\biggr]\\
&\times \PF\;L\biggl[ X_{-}''\sqcup \frak{x}''\sqcup
\left(X_{+}'\uplus X_{+}''\right)\vert  X_{-}''\sqcup
\frak{x}''\sqcup\left(X_{+}'\uplus X_{+}''\right)\biggr]
\end{split}
\nonumber
\end{equation}
Clearly, $\vert X_-''\vert=\vert\frak{x}'\sqcup\left(X_{+}'\uplus
X_{+}''\right)\vert$, otherwise $\Pf\;L(X\vert X)=0$. Thus $\TX_-$
consists of odd number of elements, and
$\vert\TX_-\vert=\vert\TX_+\vert=d$, $d$ is odd, and we repeat the
same computations as in the previous case.

\end{proof}
\begin{cor}\label{CorollaryL-ensembles}
Assume that the nonnegative function $h$ in the definition of the
$L$-matrix (see equations (\ref{PstructureL})-(\ref{Pepsilon})) is
chosen in such a way that
$$
\sum\limits_{X:
X\in\Conf(\Z+\frac{1}{2})_0}\dfrac{V(\TX_-)V(\TX_+)}{\prod(\TX_+;\TX_-)}\;h(\TX)<\infty.
$$
Then the $L$-matrix given by equations
(\ref{PstructureL})-(\ref{Pepsilon}) defines a Pfaffian
$L$-ensemble. Namely, we have
\begin{equation}\label{PReductionToPfaffianProcessesEquation}
\mbox{Prob}_{L}(X)=\dfrac{1}{\Pf\;(J+L)}\;\dfrac{V(\TX_-)V(\TX_+)}{\prod(\TX_+;\TX_-)}\;h(\TX)
\end{equation}
for $X\in \Conf^{L}(\Z+\frac{1}{2})$ and $0$ for all other $X\in
\Conf(\Z+\frac{1}{2})_0$.
\end{cor}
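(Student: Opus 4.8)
The plan is to obtain the corollary as an immediate consequence of Proposition~\ref{PTheoremReductionToPfaffianProcesses}: the substance of the argument is only to confirm that the matrix $L$ meets the two standing requirements imposed on an $L$-matrix in Section~\ref{SectionPfaffianLEnsembles}, namely $\Pf L(X|X)\ge 0$ for every finite configuration and the summability condition (\ref{Condition}), after which the assertion follows by unwinding the definition of the Pfaffian $L$-ensemble.

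First I would verify nonnegativity. By Proposition~\ref{PTheoremReductionToPfaffianProcesses} the quantity $\Pf L(X|X)$ vanishes unless $X\in\Conf^{L}(\Z+\frac{1}{2})$, so only such $X$ need be examined, and there $\Pf L(X|X)=V(\TX_-)V(\TX_+)/\prod(\TX_+;\TX_-)\cdot h(\TX)$. Since $h$ is nonnegative, it remains to fix the sign of the rational factor. Here I would use that for $X\in\Conf^{L}(\Z+\frac{1}{2})$ the sequences $\TX_-$ and $\TX_+$ are strictly increasing: for $\TX_+$ this is forced by the requirement that all its points be distinct, which makes the gaps between consecutive entries of $X_+$ at least two and hence arranges $x_1^+-1<x_1^+<x_2^+-1<\cdots$. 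With $d=|\TX_-|=|\TX_+|$, each Vandermonde then carries the sign $(-1)^{d(d-1)/2}$, so the product $V(\TX_-)V(\TX_+)$ has sign $(-1)^{d(d-1)}=+1$; and since every entry of $\TX_+$ is a positive half-integer while every entry of $\TX_-=X_-$ is negative, each factor of $\prod(\TX_+;\TX_-)$ is positive. Thus the rational factor is nonnegative and $\Pf L(X|X)\ge 0$.

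Next I would read off summability and the probabilities. Proposition~\ref{PTheoremReductionToPfaffianProcesses} collapses the defining sum onto $\Conf^{L}(\Z+\frac{1}{2})$,
\[
\sum_{X\in\Conf_0(\Z+\frac{1}{2})}\Pf L(X|X)
=\sum_{X\in\Conf^{L}(\Z+\frac{1}{2})}\frac{V(\TX_-)V(\TX_+)}{\prod(\TX_+;\TX_-)}\,h(\TX),
\]
and the right-hand side is finite by the hypothesis of the corollary, which is exactly condition (\ref{Condition}). Both standing requirements being met, $L$ defines a Pfaffian $L$-ensemble with $\Pf(J+L)$ equal to this sum by (\ref{Pf(J+L)}); substituting the evaluation of $\Pf L(X|X)$ into $\mbox{Prob}_{L}(X)=\Pf L(X|X)/\Pf(J+L)$ yields the claimed formula on $\Conf^{L}(\Z+\frac{1}{2})$ and $0$ elsewhere. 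The one place demanding care is the sign bookkeeping of the previous paragraph; the rest is a direct translation through the Proposition and the definitions, so I expect no real obstacle.
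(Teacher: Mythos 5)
Your proposal is correct and follows essentially the same route as the paper, which states this corollary as an immediate consequence of Proposition \ref{PTheoremReductionToPfaffianProcesses} together with the definitions of Section \ref{SectionPfaffianLEnsembles} (condition (\ref{Condition}), formula (\ref{Pf(J+L)}), and the definition of $\mbox{Prob}_{L}$). Your explicit sign verification (strict monotonicity of $\TX_+$ forced by distinctness, the cancelling signs $(-1)^{d(d-1)/2}$ of the two Vandermondes, and positivity of $\prod(\TX_+;\TX_-)$ since $\TX_+$ and $\TX_-$ lie on opposite sides of the origin) is a detail the paper leaves implicit, and it is carried out correctly.
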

\section{The Plancherel measures with the Jack parameters $\theta=\frac{1}{2}, 2$
 as $L$-ensembles}\label{SectionPlancherelLebsenbles}
\subsection{Expression of the Plancherel measures with the Jack parameters $\theta=\frac{1}{2}, 2$
in terms of the Frobenius-type coordinates}
\begin{prop}\label{PropositionHookFrobenius}
Let $\lambda$ be a Young diagram, and let $(P_1,\ldots,
P_D|Q_1,\ldots, Q_D)$ be the Frobenius  of $\lambda\sqcup\lambda$
(see Section \ref{SectionMainResults}). We have
$$
\frac{1}{H(\lambda,\theta=2)H'(\lambda,\theta=2)}=\frac{\prod\limits_{1\leq
i<j\leq
D}(P_i-P_j)(Q_i-Q_j)}{\prod\limits_{i=1}^D\prod\limits_{j=1}^D(P_i+Q_j+1)\prod\limits_{i=1}^DP_i!Q_i!}.
$$
\end{prop}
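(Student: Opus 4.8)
The plan is to reduce the claimed identity to two separate facts: first, that the product $H(\lambda,2)H'(\lambda,2)$ is nothing but the ordinary hook-length product of the doubled diagram $\lambda\sqcup\lambda$; and second, the classical expression for the hook-length product of an arbitrary partition in terms of its Frobenius coordinates. Since the right-hand side of the proposition is precisely the reciprocal of that Frobenius hook formula applied to $\lambda\sqcup\lambda$, combining the two facts yields the statement at once.

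First I would carry out the doubling identification. Write $\mu=\lambda\sqcup\lambda$, so that $\mu_{2k-1}=\mu_{2k}=\lambda_k$ while $\mu_j'=2\lambda_j'$. For a box $(I,J)\in\mu$ the hook length is $h_\mu(I,J)=(\mu_I-J)+(\mu_J'-I)+1$. Substituting the two cases $I=2k$ and $I=2k-1$ and using $\mu_J'=2\lambda_J'$ gives
$$
h_\mu(2k,J)=(\lambda_k-J)+2(\lambda_J'-k)+1,\qquad h_\mu(2k-1,J)=(\lambda_k-J)+2(\lambda_J'-k)+2,
$$
which are exactly the factors contributed by the box $(k,J)$ to $H(\lambda,2)$ and to $H'(\lambda,2)$ respectively (with arm $\lambda_k-J$ and leg $\lambda_J'-k$). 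Since $(2k-1,J),(2k,J)\in\mu$ if and only if $(k,J)\in\lambda$, the boxes of $\mu$ in even rows reproduce exactly the factors of $H(\lambda,2)$ and those in odd rows reproduce exactly the factors of $H'(\lambda,2)$. Hence $H(\lambda,2)H'(\lambda,2)=\prod_{(I,J)\in\mu}h_\mu(I,J)$.

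Second, I would invoke the classical formula expressing the hook-length product of a partition $\mu$ with Frobenius coordinates $(P_1,\ldots,P_D\mid Q_1,\ldots,Q_D)$ as
$$
\prod_{(I,J)\in\mu}h_\mu(I,J)=\frac{\prod_{i=1}^D P_i!\,Q_i!\;\prod_{i,j=1}^D(P_i+Q_j+1)}{\prod_{1\le i<j\le D}(P_i-P_j)(Q_i-Q_j)},
$$
which can be taken from Macdonald \cite{macdonald} or re-derived from the first-column hook-length product $\prod_i b_i!/\prod_{i<j}(b_i-b_j)$ (with $b_i=\mu_i+\ell(\mu)-i$) by re-expressing the beta-numbers through the Frobenius coordinates. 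Taking reciprocals and substituting the Frobenius coordinates of $\mu=\lambda\sqcup\lambda$ reproduces precisely the stated right-hand side, since $1/\big(H(\lambda,2)H'(\lambda,2)\big)=1/\prod_{\square\in\mu}h_\mu(\square)$.

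The conceptual core is the doubling step, and it amounts to a short bookkeeping computation once the arm and leg of each box of $\mu$ are written out; the only point requiring care is the parity split of the rows of $\mu$, which is exactly what separates the $H$-factors from the $H'$-factors. The main residual obstacle is the Frobenius hook formula: if one does not simply cite it, then converting the beta-number product into the symmetric Frobenius expression requires matching the first-column hook lengths with the particle/hole data encoded by $(P_i\mid Q_i)$ and keeping track of the signs produced by the reordering. Neither step presents a genuine difficulty, so the proof should be short.
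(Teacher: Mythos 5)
Your proposal is correct, but it takes a genuinely different route from the paper's own proof. Your first step, the identity
\begin{equation*}
H(\lambda,\theta=2)\,H'(\lambda,\theta=2)\;=\;\prod_{x\in\mu}h_{\mu}(x),\qquad \mu=\lambda\sqcup\lambda,
\end{equation*}
where $h_{\mu}$ denotes the ordinary hook length (not the weight $h$ of Section \ref{SectionMainResults}), checks out exactly as you computed: the boxes of $\mu$ in even rows give the factors of $H(\lambda,2)$ and those in odd rows give the factors of $H'(\lambda,2)$, since $(2k-1,J),(2k,J)\in\mu$ iff $(k,J)\in\lambda$. This reduces the proposition to the classical Frobenius-coordinate expression for the ordinary ($\theta=1$) hook product. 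The paper instead argues by direct computation: it cuts $\lambda$ along the diagonal $j=2i$ into a rectangle $\lambda^{\Box}$ and two pieces $\lambda^{\pm}$, evaluates the contribution of each piece to $H$ and $H'$ using the Gamma-function product formulas of Borodin and Olshanski (invoking the symmetry of Proposition \ref{PropositionHH} for $\lambda^-$) in auxiliary coordinates $p_k,\xi_k,\eta_k$, and only at the end translates everything into the Frobenius coordinates of $\lambda\sqcup\lambda$. Your reduction is shorter and conceptually more transparent: it isolates the structural reason why the doubled diagram and its Frobenius coordinates are the right bookkeeping device for $\theta=2$, which is the same observation underlying the embedding $\lambda\mapsto X$ used throughout the paper. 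What the paper's computation buys in exchange is self-containedness: it never needs the general Frobenius hook formula, only the $H$, $H'$ product formulas it already cites. The one point you should nail down is your second step: the formula $\prod_{x\in\mu}h_{\mu}(x)=\prod_i P_i!\,Q_i!\,\prod_{i,j}(P_i+Q_j+1)\big/\prod_{i<j}(P_i-P_j)(Q_i-Q_j)$ is indeed classical (equivalent, via the hook length formula, to Frobenius' formula for $\dim\mu/|\mu|!$), but you should either supply a precise reference for it in this exact form or include the short beta-number derivation you sketch, since as written that step is a citation placeholder rather than an argument.
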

\begin{proof}
In a given Young diagram $\lambda$ we consider the diagonal
$j=2i$. There are two possible cases which are distinct from each
other  whether or not the box $(d,2d)$ belongs to the Young
diagram, see Figure 1 and Figure 2.
\begin{figure}
\begin{picture}(100,150)
\multiput(-110,110)(30,0){7}{\framebox(30,30){}}
\multiput(-110,110)(30,0){4}{\framebox(30,30){X}}
\multiput(-110,80)(30,0){5}{\framebox(30,30){}}
\multiput(-110,80)(30,0){4}{\framebox(30,30){X}}
\multiput(-110,50)(30,0){3}{\framebox(30,30){}}
\multiput(-110,20)(30,0){2}{\framebox(30,30){}}
\put(-110,140){\line(2,-1){120}}
\put(-120,90){$d$}
\put(-10,145){$2d$}
\end{picture}
\caption{The box $(d,2d)$ belongs to the Young diagram.}
\end{figure}
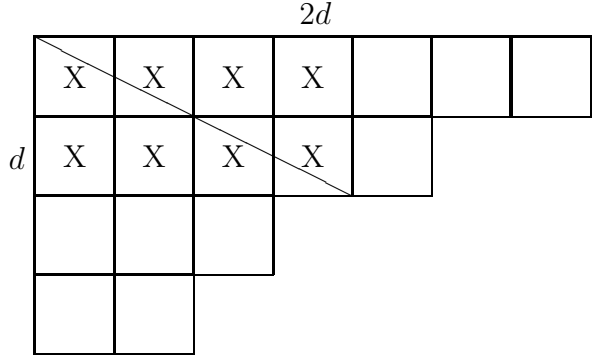
\begin{figure}
\begin{picture}(100,150)
\multiput(-110,110)(30,0){7}{\framebox(30,30){}}
\multiput(-110,110)(30,0){5}{\framebox(30,30){X}}
\multiput(-110,80)(30,0){5}{\framebox(30,30){}}
\multiput(-110,80)(30,0){5}{\framebox(30,30){X}}
\multiput(-110,50)(30,0){5}{\framebox(30,30){}}
\multiput(-110,50)(30,0){5}{\framebox(30,30){X}}
\multiput(-110,20)(30,0){2}{\framebox(30,30){}}
\put(-110,140){\line(2,-1){150}}
\put(-120,60){$d$}
\put(10,145){$2d-1$}
\end{picture}
\caption{The box $(d,2d)$ does not belong to the Young diagram.}
\end{figure}
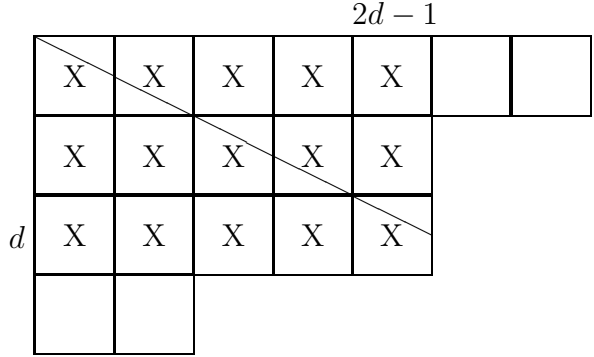
The shape $\lambda$ is divided into three pieces: the rectangular
shape $\lambda^{\Box}$ of size $d\times 2d$ (in the first case
shown on Figure 1), or of size $d\times (2d-1)$ (in the second
case shown on Figure 2); the diagram $\lambda^+$ formed by the
boxes $(ij)$ with $j\geq 2d$ (in the first case), or $j\geq 2d-1$
(in the second case); and the diagram $\lambda^-$ formed by the
boxes $(ij)$ with $i>d$. Thus in both cases the diagram $\lambda$
is composed in the following way
$$
\lambda=\lambda^{\Box}\sqcup\lambda^+\sqcup\lambda^-.
$$
In the subsequent calculations we exploit the following formulae
$$
H(\lambda,\theta)=\prod\limits_{i=1}^{l(\lambda)}\Gamma(\lambda_i-i\theta+l(\lambda)\theta+1)\prod\limits_{1\leq
i<j\leq
l(\lambda)}\frac{\Gamma(\lambda_i-\lambda_j+(j-i)\theta+1-\theta)}{\Gamma(\lambda_i-\lambda_j+(j-i)\theta)},
$$
$$
H'(\lambda,\theta)=\prod\limits_{i=1}^{l(\lambda)}\frac{\Gamma(\lambda_i-i\theta+l(\lambda)\theta+\theta)}{\Gamma(\theta)}\prod\limits_{1\leq
i<j\leq
l(\lambda)}\frac{\Gamma(\lambda_i-\lambda_j+(j-i)\theta)}{\Gamma(\lambda_i-\lambda_j+(j-i)\theta+\theta)}.
$$
These expressions were obtained in Borodin and Olshanski
\cite{BO-ZmeasuresScalingLimits}, and hold true for any
$\theta>0$.

Consider first the case shown on Figure 1. For $1\leq k\leq d$
introduce new coordinates
\begin{equation}\label{8.1.1}
\lambda_k=p_k+2k,\;\lambda_{2k-1}'=\xi_k+k-1,\;
\lambda_{2k}'=\eta_k+k.
\end{equation}
In terms of these coordinates we obtain
\begin{equation}\label{8.1.2}
\frac{1}{H(\lambda^+,\theta=2)H'(\lambda^+,\theta=2)}
=\frac{\prod\limits_{1\leq k<m\leq
d}(p_k-p_m)^2\left((p_k-p_m)^2-1\right)}{\prod\limits_{k=1}^dp_k!(p_k+1)!}.
\end{equation}
Next we use  the formulae stated in Proposition
\ref{PropositionHH} to rewrite the expression
$$
\frac{1}{H(\lambda^-,\theta=2)H'(\lambda^-,\theta=2)}
$$
as follows
\begin{equation}\label{8.1.3}
\begin{split}
&\frac{1}{H(\lambda^-,\theta=2)H'(\lambda^-,\theta=2)}=
\frac{2^{-2|\lambda^-|}\left[\Gamma(\frac{1}{2})\right]^{2d}}{\prod\limits_{k=1}^d\Gamma(\xi_k+\frac{1}{2})\Gamma(\eta_k+1)
\Gamma(\xi_k)\Gamma(\eta_k+\frac{1}{2})}\\
&\times\prod\limits_{1\leq k<m\leq d}(\eta_k-\eta_m)(\xi_k-\xi_m)
\prod\limits_{1\leq 2k-1<2m\leq
d}\left(\xi_k-\eta_m-\frac{1}{2}\right)\prod\limits_{1\leq
2k<2m-1\leq d}\left(\eta_k-\xi_m+\frac{1}{2}\right).
\end{split}
\end{equation}
In addition, we find
\begin{equation}\label{8.1.5}
\begin{split}
H(\lambda^{\Box},\theta=2)=\prod\limits_{i=1}^d\prod\limits_{k=1}^d(p_i+2\xi_k)(p_i+2\eta_k+1),
\end{split}
\end{equation}
and
\begin{equation}\label{8.1.6}
\begin{split}
H'(\lambda^{\Box},\theta=2)=\prod\limits_{i=1}^d\prod\limits_{k=1}^d(p_i+2\xi_k+1)(p_i+2\eta-k+2).
\end{split}
\end{equation}
The new coordinates introduced in equation (\ref{8.1.1}) are
related with the Frobenius coordinates of $\lambda\sqcup\lambda$
as
$$
P=(p_1+1,p_1,\ldots,p_d+1,p_d),\;
Q=(2\xi_1-1,2\eta_1,\ldots,2\xi_d-1,2\eta_d).
$$
We rewrite equations (\ref{8.1.2})-(\ref{8.1.6}) in terms of the
Frobenius coordinates of $\lambda\sqcup\lambda$, and arrive to the
formula stated in the statement of the Proposition. The second
case (shown on Figure 2) can be considered in the same way.
\end{proof}
\begin{prop}\label{PropositionPlancherelFrobenius}
(A) We have
\begin{equation}\label{Pltheta=2Frobenius}
M_{\Plancherel,\eta,\theta=2}(\lambda)=e^{-\eta}(2\eta)^{\frac{1}{2}\sum\limits_{i=1}^D\left(P_i+Q_i+1\right)}
\frac{\prod\limits_{1\leq i<j\leq
D}(P_i-P_j)(Q_i-Q_j)}{\prod\limits_{i=1}^D\prod\limits_{j=1}^D(P_i+Q_j+1)\prod\limits_{i=1}^DP_i!Q_i!}.
\nonumber
\end{equation}
where $\left(P_1,\ldots,P_D\vert Q_1,\ldots,Q_D\right)$ are the
Frobenius coordinates of $\lambda\sqcup\lambda$.\\
(B) We have
\begin{equation}\label{PlthetaHalfFrobenius}
M_{\Plancherel,\eta,\theta=\frac{1}{2}}(\lambda)=e^{-\eta}(2\eta)^{\frac{1}{2}\sum\limits_{i=1}^D\left(P_i'+Q_i'+1\right)}
\frac{\prod\limits_{1\leq i<j\leq
D}(P_i'-P_j')(Q_i'-Q_j')}{\prod\limits_{i=1}^D\prod\limits_{j=1}^D(P_i'+Q_j'+1)\prod\limits_{i=1}^DP_i'!Q_i'!}.
\nonumber
\end{equation}
where $\left(P_1',\ldots,P_D'\vert Q_1',\ldots,Q_D'\right)$ are
the Frobenius coordinates of $\lambda'\sqcup\lambda'$.
\end{prop}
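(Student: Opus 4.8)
The plan is to treat part (A) as an immediate consequence of the defining formula for the Poissonized Plancherel measure together with the hook-product computation already carried out in Proposition \ref{PropositionHookFrobenius}, and then to deduce part (B) from part (A) by invoking the transposition symmetry (\ref{PlancherelSymmetries}). Thus almost no new work is required; the argument is essentially bookkeeping built on the previous proposition.

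For part (A), I would start from (\ref{EquationPlancherelInfyMixed}) specialized to $\theta=2$, collecting $\eta^{|\lambda|}2^{|\lambda|}=(2\eta)^{|\lambda|}$ to write
$$
M_{\Plancherel,\eta,\theta=2}(\lambda)=e^{-\eta}(2\eta)^{|\lambda|}\frac{1}{H(\lambda,2)H'(\lambda,2)}.
$$
Substituting the expression for $\tfrac{1}{H(\lambda,2)H'(\lambda,2)}$ furnished by Proposition \ref{PropositionHookFrobenius} immediately converts the fraction into the ratio of Vandermonde-type products over the Frobenius coordinates $(P_1,\ldots,P_D\vert Q_1,\ldots,Q_D)$ of $\lambda\sqcup\lambda$. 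It then remains only to rewrite the exponent $|\lambda|$. Here I would invoke the standard box-counting identity for Frobenius coordinates: for any diagram $\mu$ with coordinates $(P_1,\ldots,P_D\vert Q_1,\ldots,Q_D)$ one has $|\mu|=\sum_{i=1}^D(P_i+Q_i+1)$, since each diagonal box contributes its arm, its leg, and itself. Applying this to $\mu=\lambda\sqcup\lambda$ and using the evident equality $|\lambda\sqcup\lambda|=2|\lambda|$ gives $|\lambda|=\tfrac12\sum_{i=1}^D(P_i+Q_i+1)$, which is exactly the exponent appearing in the claimed formula. This finishes (A).

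For part (B), rather than repeating the computation I would exploit symmetry. First I would lift the finite-level relation (\ref{PlancherelSymmetries}) to the Poissonized level. Writing $M_{\Plancherel,\eta,\theta}(\lambda)=e^{-\eta}\tfrac{\eta^{|\lambda|}}{|\lambda|!}M_{\Plancherel,\theta}^{(|\lambda|)}(\lambda)$ and using both $|\lambda|=|\lambda'|$ and (\ref{PlancherelSymmetries}) yields
$$
M_{\Plancherel,\eta,\theta=\frac12}(\lambda)=M_{\Plancherel,\eta,\theta=2}(\lambda').
$$
Then I would apply part (A) to the diagram $\lambda'$ in place of $\lambda$. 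Since the Frobenius coordinates of $\lambda'\sqcup\lambda'$ are precisely $(P_1',\ldots,P_D'\vert Q_1',\ldots,Q_D')$, the formula from (A) evaluated at $\lambda'$ is literally the formula asserted in (B).

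I do not anticipate a genuine obstacle, as the substantive identity has already been proved in Proposition \ref{PropositionHookFrobenius}. The only point demanding care is the exponent in part (A), namely the identification $|\lambda|=\tfrac12\sum_{i=1}^D(P_i+Q_i+1)$: this is the sole place where the total number of boxes, rather than the hook products, enters, so it is where I would be explicit. Everything else is a direct substitution or an appeal to the transposition symmetry.
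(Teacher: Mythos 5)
Your proposal is correct and follows essentially the same route as the paper, whose proof is a one-line appeal to Proposition \ref{PropositionHookFrobenius} together with equations (\ref{EquationPlancherelInfy})--(\ref{EquationPlancherelInfyMixed}); you have simply made explicit the two steps the paper leaves implicit, namely the box-counting identity $|\lambda|=\tfrac12\sum_{i=1}^D(P_i+Q_i+1)$ for $\lambda\sqcup\lambda$ and the use of the transposition symmetry (\ref{PlancherelSymmetries}) to reduce part (B) to part (A).
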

\begin{proof}
These expressions follow from the formula in Proposition
\ref{PropositionHookFrobenius},  and from equations
(\ref{EquationPlancherelInfy})-(\ref{EquationPlancherelInfyMixed}).
\end{proof}
\subsection{Proof of Theorem \ref{MainTheoremTheta2} (B)}
Let $\lambda$ be a Young diagram, and let $(P_1,\ldots,
P_D|Q_1,\ldots, Q_D)$ be the Frobenius coordinates of
$\lambda\sqcup\lambda$.  It is not hard to check that if $X$ is
defined in terms of these Frobenius coordinates by equations
(\ref{X1})-(\ref{X3}), then $X\in\Conf^L(\Z+\frac{1}{2})$ (see
Definition \ref{DefinitionConfigurations}). Conversely, for any
$X\in\Conf^L(\Z+\frac{1}{2})$ there exists a Young diagram
$\lambda$, $\lambda\in\Y$, such that $X=X_-\sqcup X_+$ can be
represented in terms of the Frobenius coordinates of
$\lambda\sqcup\lambda$ as in equations (\ref{X1})-(\ref{X3}). We
conclude that there is a one-to-one correspondence between $\Y$
and $\Conf^{L}(\Z+\frac{1}{2})$, and this correspondence is
defined by equations (\ref{X1})-(\ref{X3}).

Consider the $L$-matrix  defined by equations
(\ref{PstructureL})-(\ref{Pepsilon}), with the weight function $h$
defined by equation (\ref{hPlancherel}). Observe that if the
condition $X\in\Conf^{L}(\Z+\frac{1}{2})$ is not satisfied, then
$\Pf(X|X)=0$. (This follows from the very definition of
$\Conf^L(\Z+\frac{1}{2})$, see Definition
\ref{DefinitionConfigurations}). Therefore it is enough to show
that
\begin{equation}\label{proof0}
M_{\Plancherel,\eta,\theta=2}(\lambda)=\frac{\Pf
L(X|X)}{\Pf(J+L)},
\end{equation}
where $X$ is defined in terms of the Frobenius coordinates of
$\lambda\sqcup\lambda$ as in equations (\ref{X1})-(\ref{X3}). We
use Proposition \ref{PropositionPlancherelFrobenius} (A), and
rewrite $M_{\Plancherel,\eta,\theta=2}(\lambda)$ in terms of the
coordinates $\TX$ as
$$
M_{\Plancherel,\eta,\theta=2}(\lambda)=
e^{-\eta}\frac{V(\TX_-)V(\TX_+)}{\prod(\TX_+,\TX_-)}h(\TX).
$$
(The coordinates $\TX$ are constructed in terms of the coordinates
$X$ as it is explained in Section \ref{SectionSpecialClass}). Then
Proposition \ref{PTheoremReductionToPfaffianProcesses} implies
that
$$
M_{\Plancherel,\eta,\theta=2}(\lambda)= e^{-\eta}\Pf L(X|X).
$$
Since $M_{\Plancherel,\eta,\theta=2}$ is a probability measure on
$\Y$, we have
\begin{equation}\label{proofZvezda}
\begin{split}
1&=\sum\limits_{\lambda\in\Y}M_{\Plancherel,\eta,\theta=2}(\lambda)\\
&=e^{-\eta}\sum\limits_{X:\;
X\in\Conf^{L}(\Z+\frac{1}{2})}\Pf L(X|X)\\
&=e^{-\eta}\sum\limits_{X:\; X\in\Conf(\Z+\frac{1}{2})_0}\Pf
L(X|X).
\end{split}
\end{equation}
(In the last equation we have used the fact that $\Pf L(X|X)=0$
for all $X\in\Conf(\Z+\frac{1}{2})_0$ such that the condition
$X\in\Conf^{L}(\Z+\frac{1}{2})$ is not satisfied). This shows that
$$
\sum\limits_{X:\; X\in\Conf(\Z+\frac{1}{2})_0}\Pf
L(X|X)=e^{\eta}<\infty.
$$
Recall that $\Pf(J+L)$ is defined as the sum $\sum\limits_{X:\;
X\in\Conf(\Z+\frac{1}{2})_0}\Pf L(X|X)$ provided that this sum is
finite. Therefore $\Pf(J+L)=e^{\eta}$, and formula (\ref{proof0})
holds true. \qed
\subsection{Proof of Theorem \ref{MainTheoremThetaHalf} (B)}
Let $\lambda$ be a Young diagram, and let $(P_1',\ldots,
P_D'|Q_1',\ldots, Q_D')$ be the Frobenius coordinates of
$\lambda'\sqcup\lambda'$. Then equations (\ref{X1'})-(\ref{X3'})
define a one-to-one correspondence between $\Y$ and
$\Conf^L(\Z+\frac{1}{2})$. Define the $L$-matrix as in the proof
of Theorem \ref{MainTheoremTheta2} (B) (i.e. by equations
(\ref{PstructureL})-(\ref{Pepsilon}), with the weight function $h$
defined by equation (\ref{hPlancherel})). We need to show that
\begin{equation}\label{proof0'}
M_{\Plancherel,\eta,\theta=\frac{1}{2}}(\lambda)=\frac{\Pf
L(X'|X')}{\Pf(J+L)},
\end{equation}
where $X'$ is defined in terms of the Frobenius coordinates of
$\lambda'\sqcup\lambda'$ as in equations (\ref{X1'})-(\ref{X3'}).
We use  Proposition \ref{PropositionPlancherelFrobenius} (B), and
rewrite $M_{\Plancherel,\eta,\theta=\frac{1}{2}}$ as
$$
M_{\Plancherel,\eta,\theta=\frac{1}{2}}(\lambda)=e^{-\eta}\frac{V(\TX'_-)V(\TX'_+)}{\prod(\TX'_+,\TX'_-)}h(\TX'),
$$
where the coordinates $\TX'$ are related to the coordinates $X'$
in the same way as the coordinates $\TX$ are related to the
coordinates $X$, see Section \ref{SectionSpecialClass}. Formula
(\ref{proof0'}) is then obtained by the same argument as in the
proof of Theorem \ref{MainTheoremTheta2} (B). \qed
\section{The mixed $z$-measures  with the Jack parameters $\theta=\frac{1}{2}, 2$ as $L$-ensembles}
\label{SectionZmeausuresLebsenbles}
\subsection{Expression of the  $z$-measures with the Jack parameters $\theta=\frac{1}{2},
2$ in terms of the Frobenius-type coordinates}
\begin{prop}\label{PropositionMFrobeniuszMeasures}
(A) Let $
\lambda\sqcup\lambda=\left(P_1,\ldots,P_D|Q_1,\ldots,Q_D\right) $
be the Frobenius notation for the Young diagram
$\lambda\sqcup\lambda$, where $D$ is the length of the diagonal in
$\lambda\sqcup\lambda$, and $P_i$, $Q_i$ are the Frobenius
coordinates of $\lambda\sqcup\lambda$. The formula for the
$z$-measure with the Jack parameter $\theta=2$ (equation
(\ref{EquationMzztheta})) can be rewritten as follows
\begin{equation}\label{ZmeasureTheta2Frobenius}
\begin{split}
M_{z,z',\theta=2,\xi}(\lambda)&=(1-\xi)^{\frac{zz'}{2}}\xi^{\frac{1}{2}\sum\limits_{i=1}^D(P_i+Q_i+1)}
\frac{\prod\limits_{1\leq
i<j\leq D}(P_i-P_j)(Q_i-Q_j)}{\prod\limits_{i=1}^D\prod\limits_{j=1}^D(P_i+Q_j+1)}\\
&\times\prod\limits_{i=1}^D\frac{[z+1]_{P_i}[z'+1]_{P_i}[-z]_{Q_i}[-z']_{Q_i}}{P_i!Q_i!}.
\end{split}
\nonumber
\end{equation}
(B) Let $
\lambda'\sqcup\lambda'=\left(P_1',\ldots,P_D'|Q_1',\ldots,Q_D'\right)
$ be the Frobenius notation for the Young diagram
$\lambda'\sqcup\lambda'$, where $D$ is the length of the diagonal
in $\lambda'\sqcup\lambda'$, and $P_i'$, $Q_i'$ are the Frobenius
coordinates of $\lambda'\sqcup\lambda'$. The formula for the
$z$-measure with the Jack parameter $\theta=\frac{1}{2}$  can be
rewritten as follows
\begin{equation}\label{ZmeasureThetaHalfFrobenius}
\begin{split}
M_{z,z',\theta=\frac{1}{2},\xi}(\lambda)&=(1-\xi)^{2zz'}\xi^{\frac{1}{2}\sum\limits_{i=1}^D(P_i'+Q_i'+1)}
\frac{\prod\limits_{1\leq
i<j\leq D}(P_i'-P_j')(Q_i'-Q_j')}{\prod\limits_{i=1}^D\prod\limits_{j=1}^D(P_i'+Q_j'+1)}\\
&\times\prod\limits_{i=1}^D\frac{[-2z+1]_{P_i'}[-2z'+1]_{P_i'}[2z]_{Q_i'}[2z']_{Q_i'}}{P_i'!Q_i'!}.
\end{split}
\nonumber
\end{equation}
\end{prop}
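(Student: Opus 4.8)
The plan is to prove Proposition~\ref{PropositionMFrobeniuszMeasures}(A) by extracting from the $z$-measure formula \eqref{EquationMzztheta} the two factors that already appeared in the Plancherel case, namely $\frac{1}{H(\lambda,\theta=2)H'(\lambda,\theta=2)}$, and then handling the remaining ratio $(z)_{\lambda,\theta=2}(z')_{\lambda,\theta=2}$ separately. The key observation is that \eqref{EquationMzztheta} factors as
\[
M_{z,z',\xi,\theta=2}(\lambda)=(1-\xi)^{\frac{zz'}{2}}\xi^{|\lambda|}\,(z)_{\lambda,2}(z')_{\lambda,2}\cdot\frac{1}{H(\lambda,2)H'(\lambda,2)},
\]
where $t=zz'/2$ for $\theta=2$. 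Proposition~\ref{PropositionHookFrobenius} already rewrites the last hook factor in the desired Frobenius form of $\lambda\sqcup\lambda$, and $|\lambda|=\tfrac12|\lambda\sqcup\lambda|=\tfrac12\sum_{i=1}^D(P_i+Q_i+1)$ accounts for the power of $\xi$. So the entire content of the proof is to show that
\[
(z)_{\lambda,2}\,(z')_{\lambda,2}=\prod_{i=1}^D[z+1]_{P_i}[z'+1]_{P_i}[-z]_{Q_i}[-z']_{Q_i}
\]
up to the combinatorial bookkeeping that matches the indexing conventions of \eqref{X1}--\eqref{X3}.

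\emph{First} I would use the second display in the proof of Proposition~\ref{PropositionHookFrobenius}, which reintroduces the same three-piece decomposition $\lambda=\lambda^\Box\sqcup\lambda^+\sqcup\lambda^-$ and the auxiliary coordinates $p_k,\xi_k,\eta_k$ tied to the Frobenius data of $\lambda\sqcup\lambda$ via $P=(p_1+1,p_1,\dots)$ and $Q=(2\xi_1-1,2\eta_1,\dots)$. \emph{Then} I would compute $(z)_{\lambda,2}=\prod_{i=1}^{l(\lambda)}(z-2(i-1))_{\lambda_i}$ piece by piece against this decomposition, exactly paralleling how equations \eqref{8.1.2}--\eqref{8.1.6} were assembled. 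The Pochhammer products over rows split according to whether a box lies in $\lambda^\Box$, $\lambda^+$, or $\lambda^-$, and because $\theta=2$ the shift $-2(i-1)$ produces arguments spaced by $2$, which is precisely what the bracket symbol $[a]_n$ records (it is the product of an arithmetic progression with common difference $2$). The rectangular and ``plus'' pieces will collect into $[z+1]_{P_i}$-type factors, while the ``minus'' piece, after applying the transpose symmetry $(z)_{\lambda,\theta}=(-\theta)^{|\lambda|}(-z/\theta)_{\lambda',1/\theta}$ from Proposition~\ref{PropositionHH} (as was done to obtain \eqref{8.1.3}), will produce the $[-z]_{Q_i}$ factors.

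\emph{The hard part} will be the careful matching of indices and parities: the Frobenius coordinates of $\lambda\sqcup\lambda$ interleave the $p$-, $\xi$-, and $\eta$-data in a staggered fashion ($P_i$ alternating between $p_k+1$ and $p_k$, and $Q_i$ alternating between $2\xi_k-1$ and $2\eta_k$), so I must verify that the product over the doubled diagram reorganizes cleanly into a single product $\prod_{i=1}^D[z+1]_{P_i}[-z]_{Q_i}$ without leftover boundary terms. In particular the even/odd splitting of $[a]_n$ must be checked against the two cases (box $(d,2d)$ present or absent, Figures~1 and~2), since the parity of individual $P_i,Q_i$ controls which branch of the definition of $[\,\cdot\,]_n$ applies. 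Once (A) is established, part~(B) follows immediately by Proposition~\ref{PropositionMSymmetries}: applying $M_{z,z',\theta}^{(n)}(\lambda)=M_{-z/\theta,-z'/\theta,1/\theta}^{(n)}(\lambda')$ with $\theta=2$ sends the $z$-measure with $\theta=2$ evaluated on $\lambda$ to the $z$-measure with $\theta=\tfrac12$ evaluated on $\lambda'$, and the substitutions $z\mapsto -2z$, $z'\mapsto -2z'$ transform the bracket symbols in (A) into those of (B) while the Frobenius data of $\lambda\sqcup\lambda$ become those of $\lambda'\sqcup\lambda'$.
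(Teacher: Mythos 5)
Your overall scaffolding coincides with the paper's proof: you reduce everything to the content identity
\begin{equation*}
(z)_{\lambda,\theta=2}(z')_{\lambda,\theta=2}
=\prod_{i=1}^{D}[z+1]_{P_i}[z'+1]_{P_i}[-z]_{Q_i}[-z']_{Q_i},
\end{equation*}
using Proposition \ref{PropositionHookFrobenius} for the hook factor and $|\lambda|=\frac{1}{2}\sum_{i=1}^{D}(P_i+Q_i+1)$ for the power of $\xi$, and you deduce (B) from (A) exactly as the paper does, via $M_{z,z',\theta=\frac{1}{2},\xi}(\lambda)=M_{-2z,-2z',\theta=2,\xi}(\lambda')$ from Proposition \ref{PropositionMSymmetries}. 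The gap is in your plan for proving the displayed identity. You propose to split the content product over the regions $\lambda^{\Box}$, $\lambda^{+}$, $\lambda^{-}$ and assert that $\lambda^{\Box}$ and $\lambda^{+}$ collect into the $[z+1]_{P_i}$ factors while $\lambda^{-}$ (after transposition) produces the $[-z]_{Q_i}$ factors. That assignment is false: up to sign, $[-z]_{Q_{2k-1}}$ is the product of the contents $z,\,z-2,\ldots,\,z-2(\lambda'_{2k-1}-k)$ of the boxes $(k,2k-1),(k+1,2k-1),\ldots,(\lambda'_{2k-1},2k-1)$, and the first $d-k+1$ of these boxes lie inside the rectangle $\lambda^{\Box}$, not in $\lambda^{-}$. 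The simplest counterexample is $\lambda=(1)$: there $\lambda^{-}=\emptyset$ and the whole diagram is the rectangle, whose content product is $z$; yet $P_1=0$, $Q_1=1$, so $[z+1]_{P_1}=1$ and the factor $-z$ must come from a $Q$-bracket, which the empty minus piece cannot supply.

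The missing idea --- the crux of the paper's argument --- is that the correct grouping cuts across your three regions. The paper decomposes $\lambda$ into hooks anchored at the staircase boxes $(i,2i-1)$: column $2i-1$ from row $i$ downward, row $i$ from column $2i$ rightward, and column $2i$ from row $i+1$ downward (Figure 3). For $\theta=2$ the contents along these three segments form arithmetic progressions with steps $2$, $1$, $2$, and the relations $P_{2i-1}=\lambda_i-2i+1$, $P_{2i}=\lambda_i-2i$, $Q_{2i-1}=2\lambda'_{2i-1}-2i+1$, $Q_{2i}=2\lambda'_{2i}-2i$ show that the $i$-th hook contributes exactly $\pm[z+1]_{P_{2i-1}}[z+1]_{P_{2i}}[-z]_{Q_{2i-1}}[-z]_{Q_{2i}}$ (the two step-$2$ brackets $[z+1]_{P_{2i-1}}[z+1]_{P_{2i}}$ merge into the single step-$1$ row progression, since $P_{2i}=P_{2i-1}-1$), with all signs cancelling once the $z$- and $z'$-factors are multiplied together. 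Equivalently, inside the rectangle one must still split along the staircase: boxes $(i,j)$ with $j\geq 2i$ feed the $P$-brackets, while boxes with $j\leq 2i-1$ feed the $Q$-brackets. Your region-by-region computation, paralleling \eqref{8.1.2}--\eqref{8.1.6}, would produce three products that do not individually match bracket groups, and the recombination needed to finish is precisely the step your proposal leaves unaddressed; with the staircase grouping in place, the rest of your argument goes through.
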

\begin{proof}
We start from the formula
$$
M_{z,z',\theta=2,\xi}(\lambda)=
(1-\xi)^{\frac{zz'}{2}}\xi^{|\lambda|}
\frac{(z)_{\lambda,\theta=2}(z')_{\lambda,\theta=2}}{H(\lambda,\theta=2)H'(\lambda,\theta=2)}.
$$
Given a box $(i,2i-1)$ of a Young diagram $\lambda$, consider the
shape formed by the boxes
$$
(i,2i-1),(i+1,2i-1),\ldots,(\lambda_{2i-1}',2i-1);
$$
$$
(i,2i),(i,2i+1),\ldots,(i,\lambda_i);
$$
and
$$
(i+1,2i),(i+2,2i),\ldots,(\lambda_{2i}',2i),
$$
see Figure 3.
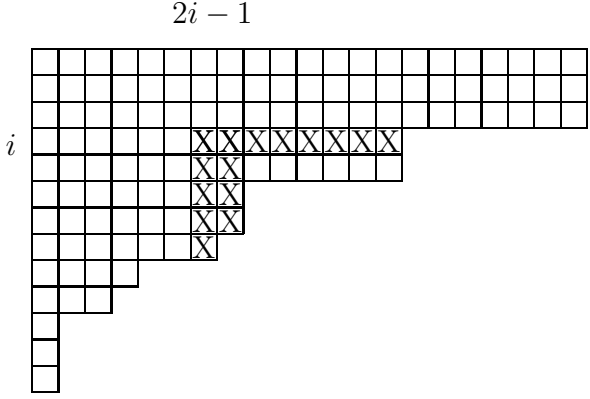
\begin{figure}
\begin{picture}(100,150)
\multiput(-110,110)(10,0){21}{{\dashbox{10}(10,10){}}}
\multiput(-100,100)(10,0){20}{{\dashbox{10}(10,10){}}}
\multiput(-100,90)(10,0){20}{{\dashbox{10}(10,10){}}}
\multiput(-100,80)(10,0){5}{{\dashbox{10}(10,10){}}}
\multiput(-50,80)(10,0){8}{{\dashbox{10}(10,10){X}}}
\multiput(-100,70)(10,0){5}{{\dashbox{10}(10,10){}}}
\multiput(-100,60)(10,0){5}{{\dashbox{10}(10,10){}}}
\multiput(-100,50)(10,0){5}{{\dashbox{10}(10,10){}}}
\multiput(-100,40)(10,0){5}{{\dashbox{10}(10,10){}}}
\multiput(-100,30)(10,0){3}{{\dashbox{10}(10,10){}}}
\multiput(-100,20)(10,0){2}{{\dashbox{10}(10,10){}}}
\multiput(-50,80)(0,-10){5}{{\dashbox{10}(10,10){X}}}
\multiput(-40,80)(0,-10){4}{{\dashbox{10}(10,10){X}}}
\multiput(-30,70)(10,0){6}{{\dashbox{10}(10,10){}}}
\put(-120,80){$i$}
\put(-57,130){$2i-1$}
\multiput(-110,100)(0,-10){12}{{\dashbox{10}(10,10){}}}
\end{picture}
\caption{The distinguished shape associated with the box
$(i,2i-1)$.}
\end{figure}
The contribution of this shape to
$(z)_{\lambda,\theta=2}(z')_{\lambda,\theta=2}$ is
$$
(z)(z-2)\ldots (z-2(\lambda_{2i-1}'-i))(z')(z'-2)\ldots
(z'-2(\lambda_{2i-1}'-i))
$$
$$
\times(z+1)(z+2)\ldots (z+(\lambda_i-2i+1))(z'+1)(z'+2)\ldots
(z'+(\lambda_i-2i+1))
$$
$$
\times(z-1)(z-3)\ldots (z+1-2(\lambda_{2i}'-i))(z'-1)(z'-3)\ldots
(z'+1-2(\lambda_{2i}'-i)).
$$
This can be rewritten in terms of the Frobenius coordinates of
$\lambda\sqcup\lambda$. Observe that the following relations hold
true
$$
P_{2i-1}=\lambda_i-2i+1,\; Q_{2i-1}=2\lambda_{2i-1}'-2i+1,\;
Q_{2i}=2\lambda_{2i}'-2i.
$$
Using these relations we find
$$
(z)_{\lambda;\theta=2}(z')_{\lambda,\theta=2}=\prod\limits_{i=1}^D[z+1]_{P_i}[z'+1]_{P_i}[-z]_{Q_i}[-z']_{Q_i}.
$$
Now we apply Proposition \ref{PropositionHookFrobenius}, and get
the formula for $M_{z,z',\theta=2,\xi}(\lambda)$. The formula for
$M_{z,z',\theta=\frac{1}{2},\xi}(\lambda)$ follows from the
relation
$$
M_{z,z',\theta=\frac{1}{2},\xi}(\lambda)=M_{-2z,-2z',\theta=2,\xi}(\lambda').
$$
This relation is a simple consequence of Proposition
\ref{PropositionMSymmetries}.
\end{proof}
\subsection{Proof of Theorem \ref{MainTheoremTheta2} (A)}
We know (see the proof of Theorem \ref{MainTheoremTheta2} (B))
that equations (\ref{X1})-(\ref{X3}) (expressing each
$X\in\Conf^L(\Z+\frac{1}{2})$ in terms of the Frobenius
coordinates of $\lambda\sqcup\lambda$) define a one-to-one
correspondence between $\Y$ and $\Conf^L(\Z+\frac{1}{2})$. Define
the $L$-matrix by equations (\ref{PstructureL})-(\ref{Pepsilon}),
with the weight function $h$ given by equation
(\ref{hzmeasureTheta=2}). If the condition
$X\in\Conf^L(\Z+\frac{1}{2})$ is not satisfied, then $\Pf
L(X|X)=0$. Therefore it is enough to show that
\begin{equation}\label{proof0z}
M_{z,z',\xi,\theta=2}(\lambda)=\frac{\Pf L(X|X)}{\Pf(J+L)},
\end{equation}
where $X$ is defined in terms of the Frobenius coordinates of
$\lambda\sqcup\lambda$ as in equations (\ref{X1})-(\ref{X3}). We
use Proposition \ref{PropositionMFrobeniuszMeasures}, and rewrite
$M_{z,z',\xi,\theta=2}$ in terms of the coordinates $X$ as
\begin{equation}\label{Zvezdaz}
M_{z,z',\xi,\theta=2}(\lambda)=(1-\xi)^{\frac{zz'}{2}}\frac{V(\TX_-)V(\TX_+)}{\prod(\TX_+,\TX_-)}h(\TX).
\end{equation}
By the same argument as in the proof of Theorem
\ref{MainTheoremTheta2} (B) we obtain equation (\ref{proof0z})
(with $\Pf(J+L)=(1-\xi)^{-\frac{zz'}{2}}$) from formula
(\ref{Zvezdaz}).\qed
\subsection{Proof of Theorem \ref{MainTheoremThetaHalf} (A)}
We use equations (\ref{X1'})-(\ref{X3'}) (expressing each
$X'\in\Conf^L(\Z+\frac{1}{2})$ in terms of the Frobenius
coordinates of $\lambda'\sqcup\lambda'$) to define a one-to-one
correspondence between $\Y$ and $\Conf^{L}(\Z+\frac{1}{2})$.
Observe that formula (\ref{ZmeasureThetaHalfFrobenius}) can be
rewritten in terms of the coordinates $X'$ as
$$
M_{z,z',\xi,\theta=\frac{1}{2}}(\lambda)=(1-\xi)^{2zz'}\Pf
L(X'|X'),
$$
where the $L$-matrix is defined as in the statement of Theorem
\ref{MainTheoremThetaHalf} (A). This follows from Proposition
\ref{PropositionMFrobeniuszMeasures} (B), Proposition
\ref{PTheoremReductionToPfaffianProcesses}, and equations
(\ref{X1'})-(\ref{X3'}). By the same argument as in the proof of
Theorem \ref{MainTheoremThetaHalf} (B) we obtain that
$\Pf(J+L)=(1-\xi)^{-2zz'}$. Therefore,
$$
M_{z,z',\xi,\theta=\frac{1}{2}}(\lambda)=\frac{\Pf
L(X'|X')}{\Pf(J+L)},
$$
i.e. $M_{z,z',\xi,\theta=\frac{1}{2}}$ defines a Pfaffian
$L$-ensemble. \qed

\end{document}